\newcommand{\Z}{{\mathbb Z}}
\newcommand{\R}{{\mathbb R}}
\newcommand{\C}{{\mathbb C}}
\newtheorem{lemma}{Lemma}[section]
\newtheorem{theorem}[lemma]{Theorem}
\newtheorem{corollary}[lemma]{Corollary}
\newcommand{\nn}{\nonumber}
\newcommand{\be}{\begin{equation}}
\newcommand{\ee}{\end{equation}}
\newcommand{\ol}{\overline}
\newcommand{\ti}{\tilde}
\newcommand{\spr}[2]{\langle #1 , #2 \rangle}
\newcommand{\tr}{\mathrm{tr}}
\newcommand{\im}{\mathrm{Im}}
\DeclareMathOperator{\dist}{dist}
\newcommand{\eps}{\varepsilon}
\numberwithin{equation}{section}
\begin{document}

\title{Probabilistic averages of Jacobi operators}

\author[H.\ Kr\"uger]{Helge Kr\"uger}
\address{Department of Mathematics, Rice University, Houston, TX~77005, USA}
\email{helge.krueger@rice.edu}

\thanks{H.\ K.\ was supported by NSF grant DMS--0800100.}

\date{\today}

\keywords{Lyapunov Exponents, Schrodinger Operators}
\subjclass[2000]{Primary 81Q10; Secondary 37D25}

\begin{abstract}
 I study the Lyapunov exponent and the integrated density of states
 for general Jacobi operators. The main result is that questions about
 these, can be reduced to questions about ergodic Jacobi operators.
 Then, I apply this to $a(n) = 1$ and $b(n) = f(n^\rho \pmod{1})$
 for $\rho > 0$ not an integer, and to obtain a probabilistic
 version of the Denisov--Rakhmanov--Remling Theorem.
\end{abstract}

\maketitle

\section{Introduction}

This paper is part of my effort to study the Schr\"odinger operator,
\be\label{eq:Hrho}
 (H u)(n) = u(n+1) + u(n-1) + f(n^\rho \pmod 1) u(n),
\ee
where $u(-1) = 0$, $f: [0,1]\to\R$ is a continuous function,
and $\rho > 0$ is not an integer. I was intrigued by the
fact that for $0 < \rho < 1$ and $f(0) \neq f(1)$, one has
the absence of absolutely continuous spectrum and vanishing
of the Lyapunov exponent on an interval. This is somewhat
surprising since $n^\rho \pmod 1$ has nice uniform distribution
properties. We will discuss properties of these operators
in Section~\ref{sec:rho}. In particular, we resolve the discrepancy
between the perturbative and numerical calculations of Griniasty
and Fishman in \cite{gf} in Corollary~\ref{cor:lyaprho} by
proving an exact formula.

In order to understand the consequences and reasons for zero
Lyapunov exponent, it turned out to be useful to work with
general Jacobi operators, which are introduced by
\be
 \begin{split}
  J:\ell^2(\Z) & \to \ell^2(\Z) \\
  Ju (n) &=a(n) u(n+1) + b(n) u(n) + a(n-1) u(n-1),
 \end{split}
\ee
where $C_0^{-1} \leq a(n) \leq C_0$ and $-C_0 \leq b(n) \leq C_0$
for some $C_0 > 1$. We let $m_{\pm}(z)$ be the
Weyl--Titchmarsh $m$ functions of the restrictions of
$J$ to $\ell^2(\Z_{\pm})$. $J$ is called reflectionless on $A$
if
\be
 m_+(t) = - \ol{m_-(t)}
\ee
for almost every $t \in A$. Denote by $L(E)$ the Lyapunov
exponent of $J$, by $J^{(n)}$ the $n$-th translate of $J$,
and by $\delta_{J}$ the Dirac measure. We have that

\begin{theorem}\label{thm:int1}
 Assume $L(E) = 0$ for almost every $E \in A$ and
 \be
  \mu = \lim_{N\to\infty} \frac{1}{N} \sum_{n=0}^{N-1} \delta_{J^{(n)}}
 \ee
 in the weak $\ast$ topology, then $\mu$ almost every
 Jacobi operator has absolutely continuous spectrum in
 the essential closure of $A$ and is reflectionless
 there.
\end{theorem}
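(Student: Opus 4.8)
The plan is to reduce the statement to Kotani theory for ergodic Jacobi operators, with the averaged measure $\mu$ playing the role of the underlying probability space. Let $\Omega$ be the compact space of coefficient sequences obeying $C_0^{-1}\le a(n)\le C_0$ and $|b(n)|\le C_0$, equipped with the shift $T$ that sends a Jacobi operator to its translate $J^{(1)}$.

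First I would check that $\mu$ is $T$-invariant. This is automatic for any weak-$*$ limit of the Birkhoff averages $\frac1N\sum_{n=0}^{N-1}\delta_{J^{(n)}}$: applying $T$ merely permutes the summands and alters the average by two boundary terms, which vanish as $N\to\infty$. Hence $(\Omega,\mu,T)$ is a stationary system, and by passing to its ergodic decomposition I may assume $\mu$ is ergodic when applying Kotani theory.

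The crucial step is to identify the Lyapunov exponent $L(E)$ of the single operator $J$ with the Lyapunov exponent $L_\mu(E)$ of the ergodic family, defined by $L_\mu(E)=\lim_{N\to\infty}\frac1N\int_\Omega\log\|A_N(E,\omega)\|\,d\mu(\omega)$, where $A_N$ is the $N$-step transfer matrix. Because the block transfer matrix $A_N(E,\cdot)$ is continuous on $\Omega$ (the coefficients being uniformly bounded), the convergence $\frac1N\sum_n\delta_{J^{(n)}}\to\mu$ lets me match the block averages $\frac1N\log\|A_N\|$ evaluated along $J$ against their $\mu$-integrals; by Fekete subadditivity both sides converge to the same infimum, giving $L(E)=L_\mu(E)$. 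An equivalent route runs through the integrated density of states: the empirical definition of $\mu$ forces the IDS of $J$ to agree with that of the $\mu$-system, and the Thouless formula then transfers the equality to the Lyapunov exponents.

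With $L_\mu(E)=L(E)=0$ for a.e.\ $E\in A$, the conclusion follows from Kotani theory applied to $(\Omega,\mu,T)$: the essential support of the absolutely continuous spectrum of $\mu$-almost every operator equals the essential closure of $\{E:L_\mu(E)=0\}$, which contains the essential closure of $A$, and on this set $\mu$-almost every operator is reflectionless, i.e.\ $m_+(t)=-\ol{m_-(t)}$ for a.e.\ $t$. I expect the main obstacle to be the identification of $L(E)$ with $L_\mu(E)$: the Lyapunov exponent is only subadditive, so it is not literally a $\mu$-average of a fixed observable, and one must push the empirical convergence through the product structure of the transfer matrices uniformly in $E$, while controlling the ergodic decomposition so that the conclusions genuinely hold for $\mu$-almost every operator.
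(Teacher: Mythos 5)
There is a genuine gap, and it sits exactly where you predicted it would: the identification of $L(E)$ with the averaged exponent of the $\mu$-system. Fekete subadditivity only works in one direction. Chopping the transfer matrix over $[0,MN)$ into blocks of length $N$ and using the weak-$\ast$ convergence of the empirical measures to evaluate the block averages gives, for each fixed $N$,
\be
 \ol{L}(E,J) \;\leq\; \frac{1}{N}\int \log\|A_N(E,\cdot)\|\,d\mu \;\longrightarrow\; L_\mu(E) = \int_{\mathcal{E}} \gamma_\beta(E)\,d\alpha(\beta),
\ee
i.e.\ the single-operator exponent is bounded \emph{above} by the averaged one. But the theorem needs the reverse inequality: from $L(E)=0$ a.e.\ on $A$ you must conclude $\int_{\mathcal{E}}\gamma_\beta(E)\,d\alpha(\beta)\leq L(E)=0$, and only then does $\gamma_\beta\geq 0$ force $\gamma_\beta(E)=0$ for $\alpha$-a.e.\ $\beta$, feeding into Kotani theory. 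The upper bound your subadditivity argument delivers yields only the vacuous $0\leq L_\mu(E)$. Nor can you close the gap with Kingman's theorem: that gives $L_\mu(E)=\lim\frac1N\log\|A_N\|$ for $\mu$-a.e.\ starting operator, but the fixed $J$ is not $\mu$-generic --- indeed $\mathrm{supp}(\mu)\subseteq\omega(J)$ and $J$ need not even lie in the support of $\mu$. The reverse inequality in fact fails pointwise in $E$ in general; it holds only for a.e.\ real $E$, which signals that some harmonic-analysis input is unavoidable.

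The paper supplies that input as follows (Theorem~\ref{thm:existlyap}). For $\im(z)>0$ the exponent is not merely subadditive but becomes a genuine Birkhoff average via the decaying solution: $u_+(z,N) = (-1)^N\prod_{n=0}^N a(n)\,m_+(z,J^{(n)})$, so that, using continuity and boundedness of $J\mapsto\log|m_+(z,J)|$ on $\mathcal{J}$ together with the Ruelle--Osceledec theorem, one gets the exact identity $\ol{L}(z,\{N_j\}) = \log(A^{-1}) - \int\log|m_+(z,J)|\,d\mu(J)$, which decomposes over $\mathcal{E}$ by \eqref{eq:edc} and matches $\int\gamma_\beta(z)\,d\alpha(\beta)$ by \eqref{eq:egamma}. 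The passage to real $E$ is then done through the IDS and the Thouless formula (convergence $\nu_{N_j}\to\nu$ via $\log|c(z,N)|$) plus a Craig--Simon subharmonicity argument, and this is precisely where the ``almost every $E$'' enters. Your parenthetical remark that ``an equivalent route runs through the integrated density of states and the Thouless formula'' is in fact the correct route and is close to what the paper does; but as you present it, it is an aside, and you do not address that even this route requires the submean/subharmonicity comparison to control boundary values a.e., rather than a direct matching of block averages. The remaining structure of your argument --- shift invariance of $\mu$, ergodic decomposition, and the application of Kotani theory componentwise --- agrees with the paper's Corollary following Theorem~\ref{thm:main} and is sound once the identity $\ol{L}(E,J)=\int_{\mathcal{E}}\gamma_\beta(E)\,d\alpha(\beta)$ a.e.\ is available.
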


Remling has shown in \cite{rem} a very similar result.
He has assumed that $A \subseteq \sigma_{\mathrm{ac}}(J)$,
and concluded that every $J$ in the $\omega$ limit set of $J^{(n)}$
is reflectionless on $A$. Since
\be
 \sigma_{\mathrm{ac}}(J) \subseteq \{E:\quad L(E) = 0\},
\ee
the assumptions of the above theorem are weaker, but also
the conclusion is. One can easily
check that sparse potentials (as discussed in \cite{rem}) provide
examples, that show that this distinction is sharp.

The above theorem will follow Theorem~\ref{thm:main}, which
provides a formula for the Lyapunov exponent $L(E)$ in terms
of the Lyapunov exponents of the ergodic families arising
in the ergodic decomposition of the limit measure $\mu$.

Theorem~\ref{thm:int1} implies in particular the following result, which has
to be thought of as a probabilistic analog of the 
Densiov--Rakhmanov--Remling theorem

\begin{theorem}
 Let $J$ be a Jacobi matrix with $\sigma_{\mathrm{ess}}(J) = [-2,2]$
 and $L(E) = 0$ for almost every $E \in [-2,2]$, then for every $\eps > 0$
 \be
  \lim_{N\to\infty} \frac{1}{N}\#\{1 \leq n \leq N:\quad |a(n) - 1|  > \eps\text{ or } |b(n)| > \eps\}
   = 0.
 \ee
\end{theorem}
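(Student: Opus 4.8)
The plan is to show that the only possible weak-$\ast$ limit point of the empirical measures $\mu_N = \frac{1}{N}\sum_{n=0}^{N-1}\delta_{J^{(n)}}$ is the point mass $\delta_{J_0}$ at the free Jacobi operator $J_0$ (the one with $a\equiv 1$ and $b\equiv 0$), and then to read off the density statement from this convergence. Since the coefficients obey $C_0^{-1}\le a(n)\le C_0$ and $|b(n)|\le C_0$, all translates $J^{(n)}$ lie in a fixed compact metrizable space of Jacobi operators, so the family $\{\mu_N\}$ is relatively compact in the weak-$\ast$ topology. Hence it suffices to prove that every subsequential weak-$\ast$ limit $\mu$ equals $\delta_{J_0}$; the full convergence $\mu_N \to \delta_{J_0}$ then follows automatically.

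So I fix a subsequence along which $\mu_N \to \mu$. Applying Theorem~\ref{thm:int1} with $A = [-2,2]$ (the hypothesis $L(E)=0$ for a.e.\ $E\in A$ is exactly what is assumed), I obtain that $\mu$-almost every Jacobi operator $\ti J$ is reflectionless on the essential closure of $[-2,2]$, which is $[-2,2]$ itself, and satisfies $[-2,2]\subseteq \sigma_{\mathrm{ac}}(\ti J)$. On the other hand, every operator in $\mathrm{supp}\,\mu$ is a right limit of $J$ (any neighborhood of a support point contains $J^{(n)}$ for a positive-density set of $n$, hence for a subsequence converging to that point), so by the characterization of the essential spectrum through right limits one has $\sigma(\ti J)\subseteq \sigma_{\mathrm{ess}}(J) = [-2,2]$. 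Combining the two inclusions gives
\be
 [-2,2] \subseteq \sigma_{\mathrm{ac}}(\ti J) \subseteq \sigma(\ti J) \subseteq [-2,2],
\ee
so for $\mu$-a.e.\ $\ti J$ the spectrum is purely absolutely continuous and equal to $[-2,2]$, and $\ti J$ is reflectionless there.

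The rigidity step is now to invoke that the free operator $J_0$ is the unique whole-line Jacobi operator which is reflectionless on $[-2,2]$ with spectrum contained in $[-2,2]$, i.e.\ that the isospectral torus of the single interval $[-2,2]$ is a single point. This forces $\ti J = J_0$ for $\mu$-almost every $\ti J$, that is, $\mu = \delta_{J_0}$. As this holds for every subsequential limit, $\mu_N \to \delta_{J_0}$ in the weak-$\ast$ topology. Finally, for fixed $\eps > 0$ the set $U_\eps = \{J' : |a'(0)-1| < \eps,\ |b'(0)| < \eps\}$ is an open neighborhood of $J_0$, so by the portmanteau theorem $\liminf_{N} \mu_N(U_\eps) \ge \delta_{J_0}(U_\eps) = 1$. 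Since the coefficients of $J^{(n)}$ at the origin are $a(n)$ and $b(n)$, the bad set $\{|a(n)-1|>\eps \text{ or } |b(n)|>\eps\}$ is contained in $\{J^{(n)}\notin U_\eps\}$, whence
\be
 \frac{1}{N}\#\{1\le n\le N : |a(n)-1|>\eps \text{ or } |b(n)|>\eps\} \le 1 - \mu_N(U_\eps) \to 0,
\ee
which is the claimed assertion.

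The main obstacle is the rigidity step: upgrading the qualitative conclusion ``reflectionless with spectrum $[-2,2]$'' into the rigid identification $\ti J = J_0$. This is precisely where the specific interval $[-2,2]$ and the Denisov--Rakhmanov--Remling circle of ideas enter, and it is what allows the conclusion to be phrased directly in terms of the coefficients $a(n),b(n)$ rather than only spectrally. The remaining points, namely compactness of the coefficient space and the translation between weak-$\ast$ convergence to a point mass and a vanishing-density statement, are routine.
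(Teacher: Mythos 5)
Your proposal is correct and is essentially the paper's own argument, which proves this theorem as the special case $\mathfrak{e}=[-2,2]$ of Corollary~\ref{cor:drr}: pass to a subsequential weak-$\ast$ limit $\mu$ of the empirical measures, use Theorem~\ref{thm:main}/Kotani theory to conclude the limit operators are reflectionless with spectrum $[-2,2]$, hence lie in the isospectral torus $\mathcal{T}([-2,2])=\{J_0\}$, and translate $\mu=\delta_{J_0}$ into the density statement (your portmanteau step is exactly the paper's Lemma~\ref{lem:convtosupp}). The only minor deviation is that, where the paper pins down $\Sigma(\beta)=\mathfrak{e}$ using ergodicity of the components $\beta$ (for which $\sigma=\sigma_{\mathrm{ess}}$ almost surely, so its Lemma~\ref{lem:sigess} suffices), you instead invoke the slightly stronger right-limit inclusion $\sigma(\ti{J})\subseteq\sigma_{\mathrm{ess}}(J)$ to exclude eigenvalues outside $[-2,2]$ --- a standard fact, but one the paper's stated lemma does not quite give, so it deserves an explicit citation or proof.
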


We will obtain a generalization of this theorem for finite gap
operators (see Corollary~\ref{cor:drr}) and the above claim for the Lyapunov exponent
as corollaries of Theorem~\ref{thm:main} in the next section.

In Section~\ref{sec:space}, I collect a few results on the
space of all Jacobi operators and discuss measures on that
space. In Section~\ref{sec:ergodic}, I discuss results
about ergodic Schr\"odinger operators. The main results
are stated in Section~\ref{sec:results}. The application
to the potential $V(n) = f(n^\rho \pmod 1)$ is examined
in Section~\ref{sec:rho}. Section~\ref{sec:lyap} proofs
some facts about the Lyapunov exponent for general Jacobi matrices
and provides the proof of Theorem~\ref{thm:main}, which
has to be considered the main result of this paper.

%
%
%

\section{Probabilistic averages of Jacobi matrices}
\label{sec:space}

Given bounded sequences $a: \Z \to (0,\infty)$, $b:\Z\to\R$,
we introduce the associated Jacobi operator $J: \ell^2(\Z) \to \ell^2(\Z)$ by
\be
 (J u)(n) = a(n) u(n+1) + b(n) u(n) + a(n-1) u(n-1).
\ee
We will often identify $J$ with $(a,b)$. Fix now $C_0 > 1$,
and introduce $\mathcal{J}$ as the set of all Jacobi operators,
such that $(a,b)$ satisfy the inequalities
\be
 \frac{1}{C_0} \leq a(n) \leq C_0,\quad -C_0\leq b(n) \leq C_0.
\ee
We endow $\mathcal{J}$ with the strong operator topology,
which just corresponds to pointwise convergence on the level
of the sequences $(a,b)$. We remark that $\mathcal{J}$ is now
a compact metric space, where an explicit example of the metric
is
\begin{align}
 \nn d(J, \ti{J}) &= \sum_{n\in\Z}\frac{1}{2^{|n|}}(|\spr{\delta_n}{(J - \ti{J}) \delta_n}| + |\spr{\delta_n}{(J - \ti{J}) \delta_{n+1}}|) \\
 &= \sum_{n\in\Z} \frac{1}{2^{|n|}} (|b(n) - \ti{b}(n)| + |a(n) - \ti{a}(n)|).
\end{align}
Denote by $S$ the shift operator on $\ell^2(\Z)$ that is
\be
 (S u)(n) = u(n+1).
\ee
Introduce $\hat{S}: \mathcal{J} \to \mathcal{J}$ by
\be
 \hat{S} J = S^{\ast} J S,
\ee
and denote $J^{(n)} = \hat{S}^n J$.

We will denote by $\mathcal{M}^1$ the space of all Borel probability
measures on $\mathcal{J}$. For a Jacobi matrix $J$, we introduce
the corresponding Dirac measure $\delta_J$ by
\be
 \delta_{J}(A) = \begin{cases} 1 & J \in A \\ 0 & J \notin A. \end{cases}
\ee
We will be mainly interested in the limit points of the
averages of the Dirac measures of the translates of a Jacobi matrix.
For this, introduce for a Jacobi matrix $J$ and an integer $N\geq 1$
the average
\be
 A_{N,J} = \frac{1}{N} \sum_{n=0}^{N-1} \delta_{J^{(n)}},
\ee
which will be a measure in $\mathcal{M}^1$.

We denote by $\omega(J)$ the (topological) $\omega$ limit set of the translates
of $J$, that is
\be
 \omega(J) = \{\ti{J} \in\mathcal{J}:\quad \exists n_j\to\infty:\quad \ti{J} = \lim_{j\to\infty} J^{(n_j)}\}.
\ee
We recall that the weak $\ast$ topology on $\mathcal{M}^1$ gives
rise to the notion of convergence $\mu_n \to \mu$ if for every
continuous function $f: \mathcal{J} \to \R$ we have
\be
 \lim_{n\to\infty} \int f(J) \mu_n(J) = \int f(J) \mu(J).
\ee
We remark

\begin{lemma}
 $\mathcal{M}^1$ is a compact and metrizable space in the weak $\ast$ topology.
\end{lemma}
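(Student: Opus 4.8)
The plan is to identify $\mathcal{M}^1$ with a suitable subset of the dual space of continuous functions on $\mathcal{J}$ and then apply the standard machinery of the Riesz representation and Banach--Alaoglu theorems, exploiting that $\mathcal{J}$ is a compact metric space.

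First I would record that, since $\mathcal{J}$ is a compact metric space, the Banach space $C(\mathcal{J})$ of real-valued continuous functions on $\mathcal{J}$, equipped with the supremum norm, is separable. By the Riesz representation theorem each $\mu \in \mathcal{M}^1$ corresponds to a bounded linear functional $\Lambda_\mu(f) = \int f \, d\mu$, and this gives an injection of $\mathcal{M}^1$ into the closed unit ball $B$ of the dual $C(\mathcal{J})^\ast$. The weak $\ast$ topology on $\mathcal{M}^1$ is then precisely the subspace topology inherited from the weak $\ast$ topology on $B$, since both are characterized by convergence of $\int f \, d\mu_n$ for every $f \in C(\mathcal{J})$.

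For metrizability I would fix a countable dense set $\{f_k\}_{k \geq 1}$ in the unit ball of $C(\mathcal{J})$ and set
\[
 D(\mu, \nu) = \sum_{k=1}^\infty \frac{1}{2^k} \left| \int f_k \, d\mu - \int f_k \, d\nu \right|.
\]
This is a well-defined metric, with separation of points following from density of $\{f_k\}$. For compactness I would invoke Banach--Alaoglu, which gives that $B$ is weak $\ast$ compact; it then suffices to check that $\mathcal{M}^1$ is a weak $\ast$ closed subset of $B$. This holds because the defining conditions of a probability measure, namely positivity ($\Lambda(f) \geq 0$ for $f \geq 0$) and normalization ($\Lambda(1) = 1$), are pointwise conditions on the functional and so are preserved under weak $\ast$ limits; a weak $\ast$ closed subset of the compact set $B$ is compact.

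These steps are all routine functional analysis, and the only point requiring genuine care is the verification that the explicit metric $D$ actually generates the weak $\ast$ topology on $\mathcal{M}^1$. The hard part there is upgrading convergence of $\int f_k \, d\mu_n$ for each fixed $k$ to convergence of $\int f \, d\mu_n$ for \emph{all} $f \in C(\mathcal{J})$; this uses a density approximation together with the uniform mass bound $|\int f \, d\mu| \leq \|f\|_\infty$ valid for all probability measures $\mu$. Alternatively, one may bypass the explicit metric entirely, since metrizability of the weak $\ast$ topology on norm-bounded subsets of $C(\mathcal{J})^\ast$ is itself a standard consequence of separability of $C(\mathcal{J})$.
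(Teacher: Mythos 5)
Your proposal is correct: the paper states this lemma without proof, treating it as a standard fact, and your argument (Riesz representation to embed $\mathcal{M}^1$ into the unit ball of $C(\mathcal{J})^\ast$, Banach--Alaoglu plus weak $\ast$ closedness of the positivity and normalization conditions for compactness, and separability of $C(\mathcal{J})$ for metrizability) is exactly the canonical argument being implicitly invoked. All steps check out, including the one you flag as needing care, namely that the explicit metric $D$ generates the weak $\ast$ topology on the norm-bounded set $\mathcal{M}^1$ via density approximation and the bound $\bigl|\int f\,d\mu\bigr| \leq \|f\|_\infty$.
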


We write $\mathrm{supp}(\mu)$ for the support of a measure $\mu$, which
is the smallest closed set $A$, such that $\mu(A) = 1$. Furthermore,
we call a measure $\mu \in \mathcal{M}^1$
shift invariant, if for any Borel set $A\subseteq \mathcal{J}$
\be
 \mu(A) = \mu(\hat{S} A).
\ee

\begin{lemma}
 If $\mu = \lim_{j\to\infty} A_{N_j,J}$ in the weak $\ast$ topology,
 then
 \be
  \mathrm{supp}(\mu) \subseteq \omega(J),
 \ee
 and $\mu$ is shift invariant.
\end{lemma}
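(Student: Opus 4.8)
The plan is to establish the two conclusions by the standard arguments for weak-$\ast$ limits of Cesàro averages along an orbit, handling invariance first and the support statement second.

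For shift invariance I would run a telescoping (Krylov--Bogolyubov) computation. Since $\hat S$ is continuous, $f\circ\hat S$ is continuous whenever $f$ is, and
\[
  \int f\circ\hat S\, dA_{N,J} - \int f\, dA_{N,J}
  = \frac{1}{N}\sum_{n=0}^{N-1}\big(f(J^{(n+1)}) - f(J^{(n)})\big)
  = \frac{1}{N}\big(f(J^{(N)}) - f(J^{(0)})\big),
\]
which is bounded in absolute value by $2\|f\|_\infty/N$. Passing to the limit along $N_j$ and invoking weak-$\ast$ convergence yields $\int f\circ\hat S\, d\mu = \int f\, d\mu$ for every continuous $f$. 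Because $\mathcal J$ is a compact metric space, measures are determined by their action on continuous functions, and since $\hat S$ is a homeomorphism this identity is equivalent to $\mu(\hat S A) = \mu(A)$ for every Borel set $A$, i.e.\ $\mu$ is shift invariant.

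For the support statement I would show that any $\ti J\notin\omega(J)$ fails to lie in $\mathrm{supp}(\mu)$ by producing a neighborhood of $\mu$-measure zero. Since $\omega(J)$ is closed, $\dist(\ti J,\omega(J)) > 0$, so I can choose an open $U\ni\ti J$ with $\ol U\cap\omega(J)=\emptyset$. The key step is the claim that $\{n\geq 0:\ J^{(n)}\in\ol U\}$ is finite: were it infinite, compactness of $\ol U$ would extract a convergent subsequence of translates whose limit lies in $\ol U\cap\omega(J)$, contradicting the definition of $\omega(J)$. Hence there is $N_0$ with $J^{(n)}\notin\ol U$ for all $n\geq N_0$.

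I would then pick, via Urysohn's lemma, a continuous $g:\mathcal J\to[0,1]$ with $g(\ti J)=1$ and $g\equiv 0$ off $U$. Only the first $N_0$ terms of $\int g\, dA_{N,J}=\frac{1}{N}\sum_{n=0}^{N-1} g(J^{(n)})$ can be nonzero, so $\int g\, dA_{N,J}\leq N_0/N\to 0$; weak-$\ast$ convergence gives $\int g\, d\mu=0$. As $g$ is continuous with $g(\ti J)=1$, it exceeds $\frac12$ on some neighborhood $W\subseteq U$ of $\ti J$, whence $\mu(W)=0$ and $\ti J\notin\mathrm{supp}(\mu)$. Since $\mathrm{supp}(\mu)$ is closed, this gives $\mathrm{supp}(\mu)\subseteq\omega(J)$. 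The only nontrivial point is the finiteness claim, where the topological definition of $\omega(J)$ and compactness of $\mathcal J$ are used; the remainder is the routine telescoping estimate together with a Urysohn cutoff.
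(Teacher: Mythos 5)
The paper states this lemma without proof, so there is nothing to compare against; your proposal correctly supplies the standard argument --- the Krylov--Bogolyubov telescoping estimate for shift invariance, and for the support inclusion the observation that the orbit visits a compact set disjoint from $\omega(J)$ only finitely often, combined with a Urysohn cutoff. The one step left implicit is passing from ``every $\ti{J}\notin\omega(J)$ has a $\mu$-null neighborhood'' to $\mathrm{supp}(\mu)\subseteq\omega(J)$ under the paper's definition of support as the smallest closed set of full measure: since $\mathcal{J}$ is a compact metric space it is second countable, so countably many such null neighborhoods cover the complement of $\omega(J)$, giving $\mu(\omega(J))=1$, and the inclusion then follows because $\omega(J)$ is closed.
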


This implies the following consequence

\begin{lemma}\label{lem:sigess}
 Let $\mu = \lim_{j\to\infty} A_{N_j}(J)$ for some $N_j\to\infty$.
 Then for $\mu$ almost every $\ti{J}$, we have that
 \be
  \sigma_{\mathrm{ess}}(\ti{J}) \subseteq \sigma_{\mathrm{ess}}(J).
 \ee
\end{lemma}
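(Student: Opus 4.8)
The plan is to reduce the assertion to the elementary half of the Last--Simon characterization of the essential spectrum in terms of right limits, and then to invoke the preceding lemma. By that lemma we know $\mathrm{supp}(\mu) \subseteq \omega(J)$, and since $\mu(\mathrm{supp}(\mu)) = 1$ it suffices to prove the \emph{deterministic} statement that $\sigma_{\mathrm{ess}}(\ti{J}) \subseteq \sigma_{\mathrm{ess}}(J)$ for every $\ti{J} \in \omega(J)$. Thus all of the measure-theoretic content is carried by the previous lemma, and the remaining work concerns a single element of the $\omega$-limit set.

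So fix $\ti{J} \in \omega(J)$, that is $\ti{J} = \lim_{j\to\infty} J^{(n_j)}$ in $\mathcal{J}$ for some $n_j \to \infty$; concretely this means $a(m+n_j) \to \ti{a}(m)$ and $b(m+n_j) \to \ti{b}(m)$ for each fixed $m$. I will in fact prove the stronger inclusion $\sigma(\ti{J}) \subseteq \sigma_{\mathrm{ess}}(J)$, which gives $\sigma_{\mathrm{ess}}(\ti{J}) \subseteq \sigma(\ti{J}) \subseteq \sigma_{\mathrm{ess}}(J)$. The argument is a translated-Weyl-sequence construction. Given $E \in \sigma(\ti{J})$, use Weyl's criterion for the self-adjoint operator $\ti{J}$ together with density of finitely supported vectors and boundedness of $\ti{J}$ to produce, for each $\eps > 0$, a vector $\psi$ supported in some interval $[-M, M]$ with $\|\psi\| = 1$ and $\|(\ti{J} - E)\psi\| < \eps$.

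Now transplant $\psi$ far out along the sequence: set $\phi_j(m) = \psi(m - n_j)$, which is supported in $[n_j - M, n_j + M]$. A direct computation shows that $((J - E)\phi_j)(m)$ differs from $((\ti{J} - E)\psi)(m - n_j)$ only through the coefficient differences $a(l+n_j) - \ti{a}(l)$ and $b(l+n_j) - \ti{b}(l)$ evaluated on the finite index set where $\psi$ is nonzero; by pointwise convergence these tend to $0$, so $\|(J - E)\phi_j\| < 2\eps$ for $j$ large. Passing to a subsequence so that the supports $[n_j - M, n_j + M]$ are pairwise disjoint, the $\phi_j$ form an orthonormal system and hence converge weakly to $0$, producing a singular Weyl sequence for $J$ at $E$. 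Therefore $E \in \sigma_{\mathrm{ess}}(J)$, and since $E \in \sigma(\ti{J})$ was arbitrary the claim follows.

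The main obstacle is essentially bookkeeping in the Weyl-sequence step: one must check that the translation convention $J^{(n)} = \hat{S}^n J$ is consistent with the direction $n_j \to +\infty$, and that it is precisely the finiteness of the support of $\psi$ that upgrades the strong-operator (pointwise) convergence $J^{(n_j)} \to \ti{J}$ to $\ell^2$-norm smallness of $(J - E)\phi_j$. Once this is arranged the rest is routine. Alternatively, one may simply cite the inclusion $\sigma(\ti{J}) \subseteq \sigma_{\mathrm{ess}}(J)$ for right limits from the theory of right limits, in which case the lemma is immediate from the preceding one.
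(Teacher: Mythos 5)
Your proposal is correct and follows essentially the same route as the paper: the paper's proof is exactly the reduction via $\mathrm{supp}(\mu)\subseteq\omega(J)$ to the known fact that $\sigma_{\mathrm{ess}}(\ti{J})\subseteq\sigma_{\mathrm{ess}}(J)$ for every $\ti{J}\in\omega(J)$, which it cites without proof. You merely go one step further by supplying the standard translated-Weyl-sequence proof of that fact (indeed of the stronger inclusion $\sigma(\ti{J})\subseteq\sigma_{\mathrm{ess}}(J)$), and that argument is sound.
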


\begin{proof}
 Follows from the fact, that the inclusion holds for every $\ti{J} \in \omega(J)$.
\end{proof}

We will need the following result from measure theory, known as Portmanteau-Theorem
(see e.g. \cite[Theorem~VIII.4.10.]{els}).

\begin{theorem}
 $\mu_n \to \mu$ in the weak $\ast$ topology, is equivalent to that for every Borel
 set $B$ with $\mu(\partial B)  =0$, we have that
 \be
  \lim_{n\to\infty} \mu_n(B) = \mu.
 \ee
\end{theorem}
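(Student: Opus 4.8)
The plan is to prove both implications through the intermediate characterization in terms of open and closed sets. First I would establish that weak-$\ast$ convergence implies $\limsup_{n} \mu_n(F) \le \mu(F)$ for every closed set $F$, and dually $\liminf_n \mu_n(G) \ge \mu(G)$ for every open set $G$. The key device is to approximate the indicator of a closed $F$ from above by the continuous functions $f_k(J) = \max(0, 1 - k\,\dist(J,F))$, which decrease pointwise to $\mathbf{1}_F$. Since $\mathbf{1}_F \le f_k$ and each $f_k$ is continuous (hence bounded, as $\mathcal{J}$ is compact), weak-$\ast$ convergence gives $\limsup_n \mu_n(F) \le \int f_k\, d\mu$, and letting $k \to \infty$ with dominated convergence yields the closed-set bound. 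The open-set bound then follows by passing to complements and using that all measures involved are probability measures.

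Given these two one-sided bounds, the forward implication is immediate for any continuity set $B$ (one with $\mu(\partial B) = 0$): sandwiching via
\[
 \mu(B^\circ) \le \liminf_n \mu_n(B) \le \limsup_n \mu_n(B) \le \mu(\ol{B}),
\]
and the hypothesis $\mu(\ol{B}) = \mu(B^\circ) = \mu(B)$ collapses these inequalities to the desired limit $\mu_n(B) \to \mu(B)$.

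For the converse I would recover weak-$\ast$ convergence from convergence on continuity sets via a layer-cake argument. Given a continuous $f$, after an affine normalization I may assume $0 \le f < M$ and write $\int f\, d\nu = \int_0^M \nu(\{f > t\})\, dt$ for $\nu \in \{\mu_n, \mu\}$. The crucial observation is that, because $f$ is continuous, $\partial\{f > t\} \subseteq \{f = t\}$, so $\{f > t\}$ is a continuity set for $\mu$ whenever $\mu(\{f = t\}) = 0$. The pushforward of $\mu$ under $f$ is a finite measure on $[0,M]$ and hence has at most countably many atoms, so $\mu(\{f = t\}) = 0$ for all but countably many $t$; for each such $t$ the hypothesis gives $\mu_n(\{f > t\}) \to \mu(\{f > t\})$. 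Applying the bounded convergence theorem to the integrands $t \mapsto \mu_n(\{f > t\})$ (uniformly bounded by $1$ on the finite interval $[0,M]$) transfers this pointwise-in-$t$ convergence to convergence of the integrals, i.e. $\int f\, d\mu_n \to \int f\, d\mu$.

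The step I expect to require the most care is this converse direction, specifically the handling of the level sets $\{f = t\}$: one must argue that only countably many of them carry positive $\mu$-mass, so that almost every level set is a genuine continuity set, and verify the boundary inclusion $\partial\{f > t\} \subseteq \{f = t\}$ that makes the continuity-set hypothesis applicable. The remaining ingredients — the Urysohn-type approximation of closed sets and the layer-cake representation of the integral — are routine on the compact metric space $\mathcal{J}$.
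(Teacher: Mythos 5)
Your proposal is correct, but there is nothing in the paper to compare it against: the paper does not prove this statement at all, quoting it as the Portmanteau theorem with a citation to Elstrodt's textbook (Theorem~VIII.4.10 there). What you have written is a complete and accurate version of the standard textbook proof, and every step checks out: the Urysohn-type approximants $f_k(J) = \max(0, 1 - k\,\dist(J,F))$ do decrease pointwise to $\mathbf{1}_F$ for closed $F$, giving $\limsup_n \mu_n(F) \le \mu(F)$; the open-set bound follows by complementation precisely because all measures in $\mathcal{M}^1$ are probability measures; the sandwich argument for continuity sets is immediate; and in the converse, the boundary inclusion $\partial\{f > t\} \subseteq \{f = t\}$ (valid since $\{f>t\}$ is open and its closure lies in $\{f \ge t\}$ by continuity), the countability of atoms of the pushforward $f_\ast\mu$ on $[0,M]$, and bounded convergence in $t$ together yield $\int f\, d\mu_n \to \int f\, d\mu$. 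Two cosmetic remarks: the compactness of $\mathcal{J}$ makes every continuous $f$ bounded, so your affine normalization is harmless (constants integrate correctly against probability measures); and note the statement in the paper contains a typo --- the conclusion should read $\lim_{n\to\infty}\mu_n(B) = \mu(B)$, which is what you in fact proved. In short, you supplied the proof the paper delegates to the literature, by the canonical route.
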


We say that a sequence $J^{(n)}$ converges to a set $A \subseteq \mathcal{J}$
along $N_j \to \infty$ in probability if for every $\eps > 0$
\be
 \lim_{j \to \infty} \frac{1}{N_j} \#\{1 \leq n \leq N_j:\quad d(J^{(n)}, A) \geq \eps\} = 0.
\ee
We have the following result.

\begin{lemma}\label{lem:convtosupp}
 Let
 \be
  \mu = \lim_{j\to\infty} A_{N_j,J},
 \ee
 and $S$ a support for $\mu$. Then $J^{(n)}$ converge to $S$ along
 $N_j$ in probability.
\end{lemma}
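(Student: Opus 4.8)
The plan is to recast the counting quantity as the value $A_{N_j,J}(B_\eps)$ of the averaged measures on a suitable closed set $B_\eps$, and then to apply the Portmanteau theorem together with the fact that $B_\eps$ misses the support.

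First I would introduce the function $g\colon\mathcal{J}\to[0,\infty)$ given by $g(\ti J)=\dist(\ti J,S)$. Since $S\neq\emptyset$ (because $\mu(S)=1$), $g$ is well defined and $1$-Lipschitz with respect to the metric $d$, hence continuous. For $\eps>0$ set
\[
  B_\eps=\{\ti J\in\mathcal{J}:\ \dist(\ti J,S)\geq\eps\}=g^{-1}([\eps,\infty)),
\]
a closed set. By the definition of $A_{N_j,J}$,
\[
  A_{N_j,J}(B_\eps)=\frac{1}{N_j}\#\{0\leq n\leq N_j-1:\ \dist(J^{(n)},S)\geq\eps\},
\]
which differs from $\frac{1}{N_j}\#\{1\leq n\leq N_j:\ \dist(J^{(n)},S)\geq\eps\}$ by at most $2/N_j\to0$. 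Thus it suffices to show $\lim_{j\to\infty}A_{N_j,J}(B_\eps)=0$ for every $\eps>0$.

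Next I would record that $\mu(B_\eps)=0$: every $\ti J\in S$ has $\dist(\ti J,S)=0<\eps$, so $B_\eps\cap S=\emptyset$, and since $\mu(S)=1$ this forces $\mu(B_\eps)\leq\mu(\mathcal{J}\setminus S)=0$.

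The one point requiring care—and the main obstacle—is that the version of the Portmanteau theorem available to us demands $\mu(\partial B_\eps)=0$, which may fail for the given $\eps$. I would circumvent this with a level-set argument. Since $g$ is continuous, $\partial B_\eps\subseteq g^{-1}(\{\eps\})$, and the level sets $g^{-1}(\{t\})$ for $t\in(0,\eps)$ are pairwise disjoint; as $\mu$ is a finite measure, only countably many of them can carry positive mass. Hence there is $\eps'\in(0,\eps)$ with $\mu(\partial B_{\eps'})=0$. Because $B_\eps\subseteq B_{\eps'}$, monotonicity and the Portmanteau theorem give
\[
  \limsup_{j\to\infty}A_{N_j,J}(B_\eps)\leq\lim_{j\to\infty}A_{N_j,J}(B_{\eps'})=\mu(B_{\eps'})=0,
\]
where the final equality holds because $B_{\eps'}\cap S=\emptyset$ as well. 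Together with the first step this yields the claimed convergence in probability.
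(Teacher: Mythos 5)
Your proof is correct, and it follows the same route as the paper: rewrite the counting quantity as $A_{N_j,J}(B_\eps)$ for the closed set $B_\eps=\{\ti J:\ \dist(\ti J,S)\geq\eps\}$ and invoke the Portmanteau theorem to get $A_{N_j,J}(B_\eps)\to 0$. The one substantive difference is that you are more careful than the paper. The version of Portmanteau stated in the paper requires $\mu(\partial B)=0$, and the paper's two-line proof applies it to $B_\eps$ without checking this hypothesis, which can indeed fail for a particular value of $\eps$. Your repair---observing that $\partial B_\eps\subseteq g^{-1}(\{\eps\})$ for the continuous function $g=\dist(\cdot,S)$, that only countably many level sets $g^{-1}(\{t\})$ can carry positive $\mu$-mass, and then passing to a good radius $\eps'\in(0,\eps)$ with $B_\eps\subseteq B_{\eps'}$ and $\mu(B_{\eps'})=0$---is exactly the standard way to close this gap, and your bookkeeping of the index shift between $0\leq n\leq N_j-1$ and $1\leq n\leq N_j$ (an $O(1/N_j)$ discrepancy the paper calls ``rewriting'') is also handled correctly. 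So this is the paper's argument, executed at a level of rigor the paper itself omits.
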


\begin{proof}
 For $\eps > 0$ apply the last theorem to $B = \{J: \quad \dist(S, J) \geq \eps\}$
 to conclude that $A_{N_j,J}(B) \to 0$ as $j\to\infty$. By rewriting, one sees
 that this is exactly the definition of convergence in probability.
\end{proof}

For $\Lambda\subseteq\Z$, denote by $J_{\Lambda}$ the restriction of $J$ to $\ell^2(\Lambda)$.
We will call $f: \mathcal{J} \to \R$ compactly supported, if
there is a finite set $\Lambda \subseteq \Z$ such that
\be
 f(J) = f(\ti{J}),
\ee
whenever $J_{\Lambda} = \ti{J}_{\Lambda}$. We have that

\begin{lemma}\label{lem:compsupp}
 For $\mu_n,\mu \in \mathcal{M}^1$, we have
 \be
  \mu_n \to \mu
 \ee
 in the weak $\ast$ topology, if and only if for every
 compactly supported $f$ 
 \be\label{eq:convcomp}
  \lim_{n\to\infty} \int f d\mu_n = \int f d\mu.
 \ee
\end{lemma}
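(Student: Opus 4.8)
The plan is to prove both implications, with the forward direction being essentially immediate and the reverse direction requiring a density argument. For the forward implication, suppose $\mu_n \to \mu$ in the weak $\ast$ topology. Every compactly supported $f$ is continuous on $\mathcal{J}$ (since the strong operator topology is pointwise convergence of $(a,b)$, and a compactly supported $f$ depends only on finitely many coordinates $J_\Lambda$, it is automatically continuous). Hence \eqref{eq:convcomp} is just a special case of the defining property of weak $\ast$ convergence, and nothing more is needed.

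For the reverse implication, suppose \eqref{eq:convcomp} holds for every compactly supported $f$; I must upgrade this to convergence against all continuous $f:\mathcal{J}\to\R$. The key step is to show that compactly supported functions are dense in $C(\mathcal{J})$ in the uniform norm. To see this, I would fix a continuous $f$ and $\eps>0$, and use the compactness of $\mathcal{J}$ together with the explicit metric $d$ to produce a uniformly good finite-coordinate approximation. Concretely, I would define $f_\Lambda(J) = f(P_\Lambda J)$, where $P_\Lambda J$ agrees with $J$ on the finite window $\Lambda = \{-M,\dots,M\}$ and is set to some fixed reference value off $\Lambda$; such $f_\Lambda$ is compactly supported. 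Because the metric $d$ weights the coordinate $n$ by $2^{-|n|}$, truncating outside $\Lambda$ moves $J$ by at most $\sum_{|n|>M} 2^{-|n|}\cdot(\text{const})$, which tends to $0$ as $M\to\infty$ uniformly in $J$; by uniform continuity of $f$ on the compact space $\mathcal{J}$, this forces $\|f - f_\Lambda\|_\infty \to 0$.

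Granting the density, the conclusion follows by a standard $3\eps$ argument: given continuous $f$ and $\eps > 0$, pick a compactly supported $g$ with $\|f-g\|_\infty < \eps$. Then for all $n$,
\be
 \left|\int f\,d\mu_n - \int f\,d\mu\right| \leq \|f-g\|_\infty + \left|\int g\,d\mu_n - \int g\,d\mu\right| + \|f-g\|_\infty,
\ee
where I used that $\mu_n,\mu$ are probability measures to bound the outer terms by $\|f-g\|_\infty$. The middle term tends to $0$ by hypothesis \eqref{eq:convcomp}, so the limsup of the left side is at most $2\eps$; letting $\eps\to 0$ gives \eqref{eq:convcomp} for all continuous $f$, which is weak $\ast$ convergence.

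The main obstacle is the density claim, and specifically verifying that the truncation $f_\Lambda$ approximates $f$ \emph{uniformly} rather than merely pointwise. This is where compactness of $\mathcal{J}$ is essential: it upgrades continuity of $f$ to uniform continuity, and the summable weights $2^{-|n|}$ in $d$ guarantee that the $d$-distance between $J$ and its truncation is controlled by a tail that is small independently of $J$. Once uniform continuity and the uniform smallness of $d(J,P_\Lambda J)$ are in hand, the rest is routine.
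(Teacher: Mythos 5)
Your proof is correct and follows the same overall strategy as the paper's: reduce weak $\ast$ convergence to the hypothesis \eqref{eq:convcomp} by showing that (continuous) compactly supported functions are uniformly dense in $C(\mathcal{J})$, and then run the standard $3\eps$ argument using that $\mu_n, \mu$ are probability measures. Where you differ is in how the density is established. The paper covers $\mathcal{J}$ by the sets $U_{J,K}=\{\hat{J}:\ \hat{J}_{[-K,K]}=J_{[-K,K]}\}$ and extracts a finite subcover to obtain a uniform window size $K$; as written this step is shaky, because $U_{J,K}$ demands \emph{exact} equality of the coordinates on $[-K,K]$ and is therefore a closed set with empty interior in the product topology, not an open set (to make the cover argument work one must replace it by a neighborhood where the coordinates are merely $\delta$-close). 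Your route via the explicit truncation $f_\Lambda = f\circ P_\Lambda$ sidesteps this entirely: $d(J,P_\Lambda J)$ is bounded by the tail $\sum_{|n|>M} 2^{-|n|}$ times a constant depending only on $C_0$, uniformly in $J$, and uniform continuity of $f$ on the compact metric space $\mathcal{J}$ then yields $\|f-f_\Lambda\|_\infty\to 0$. So your density argument is cleaner and in fact repairs the weak point of the paper's version; note only that you should check $P_\Lambda J\in\mathcal{J}$ (e.g.\ set $a\equiv 1$, $b\equiv 0$ off $\Lambda$, which is admissible since $C_0>1$) and observe that $f_\Lambda$ is continuous because $P_\Lambda$ is.

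One caveat on your forward direction: the parenthetical claim that a compactly supported $f$ is \emph{automatically} continuous is false under the paper's literal definition. For instance, $f(J)=1$ if $b(0)>0$ and $f(J)=0$ otherwise depends on a single coordinate but is discontinuous, and for such $f$ the forward implication genuinely fails (take $\mu_n=\delta_{J_n}$ with $b_n(0)=1/n$, so $\mu_n\to\delta_J$ with $b(0)=0$ while $\int f\,d\mu_n=1\not\to 0$). The lemma is only true if ``compactly supported'' is read as ``continuous and compactly supported,'' which is how the paper implicitly uses it (its proof likewise dismisses the forward direction as clear); your conclusion is the intended one, but it should be justified by this standing continuity assumption rather than claimed to come for free from finite-coordinate dependence.
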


\begin{proof}
 It clearly suffices to show that \eqref{eq:convcomp} implies
 weak $\ast$ convergence. So given $f: \mathcal{J} \to \R$
 and $\eps > 0$, we have that show that there is an $N\geq 1$
 $$
  \forall n \geq N:\quad |\int f d\mu_n - \int f d\mu| \leq \eps.
 $$
 Since, $f$ is continuous, we may find for each $J \in \mathcal{J}$
 an integer $K \geq 1$ such that $|f(J) - f(\ti{J})| \leq \frac{\eps}{2}$,
 where
 $$
  \ti{J} \in U_{J, K} = \{\hat{J}:\quad \hat{J}_{[-K,K]} = J_{[-K,K]}\}.
 $$
 Since the $U_{J,K}$ are open sets and $\mathcal{J}$ is compact,
 finitely many of them cover $\mathcal{J}$. In particular, we can
 find a maximal necessary $K$. Hence, we may approximate $f$
 by $\ti{f}$, which is supported on $[-K,K]$ such that
 $\|f - \ti{f}\|_{\infty} < \frac{\eps}{2}$. Now the
 claim follows by \eqref{eq:convcomp}.
\end{proof}

%
%
%

\section{Families of ergodic Schr\"odinger operators}
\label{sec:ergodic}

In this section, we collect basic facts about ergodic
Jacobi operators. For the Jacobi operator background see \cite{d1} or Section~7 of \cite{siem}.
For the measure theoretic part, see \cite{kh} or \cite{phe}

Denote by $\mathcal{M}^1_S$ the set of all shift invariant measures. One can check that
$\mathcal{M}^1_S$ will be a convex set, and in particular we will
write $\mathcal{E}$ for its extremal points. 

It is a known fact in ergodic theory, that $\mathcal{E}$ are exactly
the ergodic measures of the dynamical system $(\mathcal{J}, \hat{S})$,
where
\be
 \hat{S}(J) = S^{\ast} J S.
\ee
Furthermore, it follows from Choquet's theorem that one can
write any measure $\mu \in \mathcal{M}^1_S$ as a generalized convex
combination. That is, there exists a measure $\alpha$ on $\mathcal{E}$
such that for any $f: \mathcal{J} \to \R$ continuous, one has
\be\label{eq:edc}
 \int f d\mu = \int \left(\int f d\beta\right) d\alpha(\beta).
\ee

We call an ergodic measure $\beta\in\mathcal{E}$ on the
space of Jacobi operators $\mathcal{J}$ a family  of ergodic Jacobi operators.
One knows from general fact, that there is a set $\Sigma(\beta)$
such that
\be
 \Sigma(\beta) = \sigma(J)
\ee
for $\beta$ almost every $J$. We may define its Lyapunov exponent by
\be
 \gamma_{\beta}(z) = \lim_{N \to \infty} \frac{1}{N} \int_{\mathcal{J}} 
 \log\left\| \prod_{n=N-1}^{0} \frac{1}{a(n)} \begin{pmatrix} z - b(n) & 1 \\ a(n)^2 &  0 \end{pmatrix} \right\| d\beta(J),
\ee
and its integrated density of states by
\be
 k_{\beta} (E) = \int_{\mathcal{J}} \spr{\delta_0}{\chi_{(-\infty,E)}(J) \delta_0} d\beta(J).
\ee
We note that this quantity is equal to
\be
 k_{\beta}(E) = \lim_{N\to\infty} \frac{1}{N} \int_{\mathcal{J}} \tr(P_{(-\infty,E)}(J_{[0,N-1]})) d\beta(J).
\ee
We will need the following result of Kotani theory. 

\begin{theorem}\label{thm:kot}
 Denote by $\mathcal{Z}$ the essential closure of the set
 \be
 \{E\in \R:\quad \gamma_{\beta}(E) = 0\}
 \ee
 then $\beta$ almost every $J$ has purely absolutely
 continuous spectrum on $\mathcal{Z}$ and it is reflectionless there.
\end{theorem}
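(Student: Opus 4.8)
The plan is to prove this as the combination of the Ishii--Pastur--Kotani identification of the absolutely continuous spectrum with $\mathcal{Z}$ and Kotani's reflectionless theorem, working throughout with the complexified Lyapunov exponent $\gamma_\beta(z)$, $z = E + \I\eps$, $\eps > 0$, and the averaged Weyl $m$-functions. For $\im z > 0$ the half-line functions $m_\pm(z,J)$ are Herglotz, and by shift invariance of $\beta$ the averages $\langle m_\pm\rangle(z) = \int_{\mathcal{J}} m_\pm(z,J)\, d\beta(J)$ are again Herglotz functions of $z$. The first task is to record the analytic structure of $\gamma_\beta$. By the Thouless formula,
\be
 \gamma_\beta(z) = \int_\R \log|E - z|\, dk_\beta(E) - \langle \log a\rangle_\beta,
\ee
so $\gamma_\beta$ is harmonic on $\C \setminus \R$ and is the real part of an analytic function $W(z)$ with $W'(z) = -\langle m_+\rangle(z)$, the averaged half-line $m$-function being tied to the density of states. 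The Cauchy--Riemann equations then yield
\be
 \frac{\partial \gamma_\beta}{\partial \eps}(E + \I\eps) = \im\langle m_+\rangle(E+\I\eps), \qquad \frac{\partial\gamma_\beta}{\partial E}(E+\I\eps) = -\re\langle m_+\rangle(E + \I\eps),
\ee
and these identities are the engine of the whole argument.

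For the absolutely continuous part I would exploit $\gamma_\beta \geq 0$ together with its vanishing a.e.\ on $\mathcal{Z}$. A boundary-value (Fatou) analysis of the harmonic function $\gamma_\beta$, combined with the Kotani lower bound $\gamma_\beta(E+\I\eps) \geq c\,\eps\int_{\mathcal{J}} \im m_+(E + \I\eps,J)\, d\beta(J)$, forces $\liminf_{\eps\to 0}\im\langle m_\pm\rangle(E+\I\eps)$ to stay finite, and on the essential-support level positive, for a.e.\ $E \in \mathcal{Z}$. By the general theory of boundary values of Herglotz functions this means $m_\pm(E + \I 0, J)$ exist finitely with $\im m_\pm(E+\I 0,J) > 0$ for a.e.\ $E \in \mathcal{Z}$ and $\beta$-a.e.\ $J$. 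The first statement gives that the spectral measure of $J$ restricted to $\mathcal{Z}$ is purely absolutely continuous (its singular part would live where the boundary value of $m$ is infinite), and the positivity identifies $\mathcal{Z}$ as an essential support of the a.c.\ part.

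The main obstacle is the reflectionless identity $m_+(E+\I 0) = -\ol{m_-(E+\I 0)}$, which is the genuinely deep part of Kotani theory. Here I would establish the quantitative estimate
\be
 \int_A \gamma_\beta(E)\, dE \geq c\int_A \int_{\mathcal{J}} \frac{|m_+(E+\I 0,J) + \ol{m_-(E+\I 0,J)}|^2}{\im m_+(E+\I 0,J)}\, d\beta(J)\, dE
\ee
for bounded sets $A$, obtained by integrating the Cauchy--Riemann relation in $\eps$ and controlling the resulting bilinear form in the $m$-functions; the superharmonic structure of $\gamma_\beta$ is what makes the sign work out. Restricting to $A \subseteq \mathcal{Z}$, where the left-hand side vanishes, then forces $m_+(E+\I 0,J) + \ol{m_-(E+\I 0,J)} = 0$ for a.e.\ $E \in \mathcal{Z}$ and $\beta$-a.e.\ $J$, which is precisely the reflectionless condition.

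Proving the last inequality with the correct sign — essentially Kotani's original computation relating the $\eps$-derivative of the Lyapunov exponent to the mismatch $m_+ + \ol{m_-}$ — is where the real work lies; everything else is boundary-value bookkeeping for Herglotz functions and a passage from the averaged statements back to the $\beta$-a.e.\ pointwise statements, the latter being legitimate because $\gamma_\beta$, the spectral measures, and the $m$-functions are all measurable in $J$ and the exceptional $E$-sets have measure zero uniformly.
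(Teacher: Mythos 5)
The paper does not actually prove Theorem~\ref{thm:kot}: it is invoked as a known result of Kotani theory (with the background references \cite{d1}, \cite{tjac}), so your attempt is to be judged against the standard Kotani argument, whose architecture you correctly identify (Thouless formula, harmonicity of $\gamma_\beta$, Cauchy--Riemann identities, Herglotz boundary values, a quantitative inequality forcing $m_+=-\ol{m_-}$ on $\{\gamma_\beta=0\}$). The problem is that your central identity is false. In the Jacobi setting one has, as in the paper's \eqref{eq:egamma}, $\gamma_\beta(z)=\log(A_\beta^{-1})-\int\log|m_+(z,J)|\,d\beta(J)$, so the analytic completion of $\gamma_\beta$ is $W(z)=\log(A_\beta^{-1})-\int\log m_+(z,J)\,d\beta(J)$ and hence $W'(z)=-\int \frac{m_+'(z,J)}{m_+(z,J)}\,d\beta(J)$; equivalently, differentiating the Thouless formula \eqref{eq:thouless} and using that $\nu_\beta$ is the $\beta$-average of the spectral measures of $\delta_0$, $W'(z)=-\int G_J(0,0;z)\,d\beta(J)$, the averaged diagonal whole-line Green's function. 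Your claim $W'=-\langle m_+\rangle$, and with it the engine identities $\partial_\eps\gamma_\beta=\im\langle m_+\rangle$ and $\partial_E\gamma_\beta=-\re\langle m_+\rangle$, fails already for the free Jacobi matrix: there $W'(z)=(z^2-4)^{-1/2}$, so $\partial_\eps\gamma(E+\I 0)=(4-E^2)^{-1/2}$ on $(-2,2)$, while $\im m_+(E+\I 0)=\tfrac12\sqrt{4-E^2}$; these disagree except at $E^2=2$. You appear to have transplanted the continuum identity, where $W=-\langle m_+\rangle$ itself with no derivative; in the discrete case the correct Herglotz function driving the argument is $\langle m_+'/m_+\rangle$, i.e.\ $\langle G(0,0)\rangle$, and with the wrong function the subsequent estimates do not assemble.

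Two further steps fail even after correcting the identity. First, the purity argument: finiteness of $\im m_\pm(E+\I 0,J)$ for a.e.\ $E$ is automatic for \emph{every} Herglotz function on all of $\R$ (Fatou) and carries no spectral information, since singular spectrum is supported on the Lebesgue-null set where $\im m\to\infty$; no almost-every-in-$E$ finiteness statement can exclude it. Likewise, an upper bound on the \emph{average} $\int\im m_+\,d\beta$ (which is what your proposed inequality $\gamma_\beta(E+\I\eps)\geq c\,\eps\int\im m_+\,d\beta$ would give, were it established) cannot yield the pointwise positivity $\im m_\pm(E+\I 0,J)>0$; the genuine Kotani input is control of the reciprocal, $\int(\im m_\pm)^{-1}d\beta<\infty$ for a.e.\ $E\in\mathcal{Z}$, from which both positivity and the exclusion of the singular part are extracted, together with the reflectionless structure. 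Second, the displayed inequality $\int_A\gamma_\beta \geq c\int_A\int |m_++\ol{m_-}|^2(\im m_+)^{-1}d\beta\,dE$ \emph{is} the theorem: you state it, restrict to $A\subseteq\mathcal{Z}$, and defer its proof as ``where the real work lies.'' That self-assessment is accurate, but it means the deep content of Kotani theory (the Riccati/Green's-function computation and the equality analysis behind that bound) is assumed rather than proved. As it stands, the proposal is a roadmap with a wrong signpost at the central identity and the hard estimate left unproved, not a proof.
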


We define
\be
 \log(A_{\beta}) = \int_{\mathcal{J}} \log(a(0)) d\beta(J).
\ee
Furthermore $m_+(z,J) = \spr{\delta_0}{(J_+ - z)^{-1} \delta_0}$,
where $J_+$ is the restriction of $J$ to $\ell^2(\Z_+)$.

\begin{lemma}\label{lem:ergodic}
 We have that
 \begin{align}
  \label{eq:egamma}   \gamma_{\beta}(z) & = \log(A_{\beta}^{-1}) - \int_{\mathcal{P}} \log|m_+(z,J)| d\beta(J) \\
  \label{eq:thouless}           & = \log(A_{\beta}^{-1}) + \int \log|t - z| d\nu_{\beta}(z) 
 \end{align}
 for every $z\in\C$.
\end{lemma}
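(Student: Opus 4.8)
The plan is to prove the two displayed equalities separately, in each case by identifying the right-hand side with $\gamma_\beta(z)$, and to work first with $z$ in the open upper half-plane $\C_+ = \{z : \im z > 0\}$, extending to all of $\C$ at the end. Throughout I fix an ergodic $\beta \in \mathcal{E}$ and use that the shift $\hat S$ is measure preserving and ergodic for $\beta$, so that for every $g \in L^1(\beta)$ Birkhoff's theorem gives $\frac1N \sum_{n=0}^{N-1} g(J^{(n)}) \to \int g\, d\beta$ for $\beta$-a.e.\ $J$, and Kingman's subadditive theorem gives $\frac1N \log\|T_N(z)\| \to \gamma_\beta(z)$ for $\beta$-a.e.\ $J$ (the latter limit coinciding with the integral defining $\gamma_\beta$). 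The two nontrivial analytic inputs will be the exponential growth rates of concrete solutions of $Ju = zu$.

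For \eqref{eq:egamma}, fix $z \in \C_+$, so that $J$ has a solution $u = u(\cdot,z,J)$ of $Ju = zu$ that is square summable at $+\infty$ (the Weyl solution). Coefficient stripping, $m_+(z,J^{(n)}) = \bigl(b(n) - z - a(n)^2 m_+(z,J^{(n+1)})\bigr)^{-1}$, is equivalent to the ratio identity $u(n)/u(n-1) = -a(n-1)\, m_+(z,J^{(n)})$, which I would verify by substituting into the three-term recurrence $a(n)u(n+1) = (z-b(n))u(n) - a(n-1)u(n-1)$. Telescoping and dividing by $N$ yields
\[
 \frac1N \log\Bigl|\frac{u(N)}{u(0)}\Bigr| = \frac1N \sum_{n=0}^{N-1} \log a(n) + \frac1N \sum_{n=1}^{N} \log|m_+(z,J^{(n)})|.
\]
The left side tends to $-\gamma_\beta(z)$ because for $z \in \C_+$ the Weyl solution spans the contracting Oseledets direction of the $\det = \pm1$ cocycle and decays at rate $-\gamma_\beta(z)$; the first sum tends to $\log A_\beta$ and the second to $\int \log|m_+(z,J)|\,d\beta(J)$ by Birkhoff. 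The integrability needed here is easy: with $\|J\| \le 3C_0$ and $\eta = \im z$, the Herglotz function $m_+$ satisfies $\tfrac{\eta}{R^2} \le |m_+(z,J)| \le \tfrac1\eta$ for $R = 3C_0 + |z|$, so $\log|m_+(z,\cdot)|$ is in fact bounded. Rearranging gives \eqref{eq:egamma}.

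For \eqref{eq:thouless}, let $p_N(z) = \det(z - J_{[0,N-1]})$ and let $\nu_\beta^{(N)} = \frac1N \sum_j \delta_{E_j^{(N)}}$ be the normalized eigenvalue counting measure of $J_{[0,N-1]}$, so that $\frac1N \log|p_N(z)| = \int \log|z-t|\,d\nu_\beta^{(N)}(t)$. The formula for $k_\beta$ stated above, together with ergodicity, says that $\nu_\beta^{(N)} \to \nu_\beta$ weakly for $\beta$-a.e.\ $J$; since $t \mapsto \log|z-t|$ is bounded and continuous on the compact set $[-3C_0,3C_0] \supseteq \mathrm{supp}\,\nu_\beta$ when $z \in \C_+$, this gives $\frac1N \log|p_N(z)| \to \int \log|z-t|\,d\nu_\beta(t)$. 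On the other hand the $p_N$ obey the Jacobi recurrence, whence $p_N(z) = \bigl(\prod_{j=0}^{N-1} a(j)\bigr)\, u_D(N,z,J)$, where $u_D$ is the Dirichlet solution normalized by $u_D(-1)=0,\ u_D(0)=1$. Thus $\frac1N \log|p_N(z)| = \frac1N \sum_{j=0}^{N-1} \log a(j) + \frac1N \log|u_D(N)| \to \log A_\beta + \gamma_\beta(z)$, using Birkhoff for the first term and the top-rate growth $\frac1N \log|u_D(N)| \to \gamma_\beta(z)$ for the second. Equating the two limits gives $\int \log|z-t|\,d\nu_\beta(t) = \gamma_\beta(z) + \log A_\beta$, i.e.\ \eqref{eq:thouless}.

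The hardest point in both prongs is the same: identifying the exponential rate of a concrete solution with $\pm\gamma_\beta$. For $z \in \C_+$ one has $\gamma_\beta(z) > 0$ and an Oseledets splitting of the cocycle into a one-dimensional contracting line and its expanding complement; the crux is that the Weyl datum lies on the contracting line (it decays at $-\gamma_\beta$), whereas the Dirichlet datum does \emph{not}, since otherwise $u_D$ would be the square summable eigenfunction, forcing $z$ to be a real eigenvalue of the self-adjoint $J$, a contradiction — so $u_D$ grows at the full rate $+\gamma_\beta$. Once $z \in \C_+$ (and, by complex conjugation, $z \in \C_-$) is settled, I would extend to all of $\C$ by subharmonicity: both $\gamma_\beta(z)$ and the logarithmic potential $\int \log|z-t|\,d\nu_\beta(t)$ are subharmonic on $\C$ and agree off the real axis, a set of full area measure, hence agree everywhere, proving \eqref{eq:thouless} for every $z$; \eqref{eq:egamma} then extends to real $z$ through the a.e.\ existing normal boundary values $m_+(E+\I 0, J)$ of the Herglotz functions $m_+(\cdot, J)$.
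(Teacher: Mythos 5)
Your proof is correct, but note that the paper never proves Lemma~\ref{lem:ergodic} at all: it appears in the ``collect basic facts'' section and is quoted as standard Kotani theory, with \eqref{eq:thouless} labelled the Thouless formula and the reader pointed to the references (Damanik's survey, Simon, Teschl). So there is no in-paper proof to match; what your argument actually parallels is the paper's Section~\ref{sec:lyap} proof of the \emph{non-ergodic} analogue, Theorem~\ref{thm:existlyap}. The correspondence is close to exact: your telescoped ratio identity for the Weyl solution is \eqref{eq:upexpan} and Lemma~\ref{lem:up}; your identity $p_N(z)=\bigl(\prod_{j=0}^{N-1}a(j)\bigr)u_D(N,z)$ is \eqref{eq:detformula} (the paper's cosine solution $c$, with $c(-1)=0$, $c(0)=1$, \emph{is} your Dirichlet solution) together with \eqref{eq:logczn}; and your self-adjointness argument that $u_D$ cannot lie on the contracting Oseledets line is verbatim the paper's ``the cosine solution can never decay, since $J$ is self-adjoint'' step. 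The difference in machinery is that the paper replaces Birkhoff/Kingman by the assumed weak-$\ast$ convergence of the empirical measures $A_{N_j,J}$, which is what lets it operate without ergodicity; your route buys the ergodic statement directly, with $\beta$-a.e.\ pointwise limits and the easy boundedness of $\log|m_+(z,\cdot)|$ (your two-sided Herglotz bound is right: $\im m_+ = \im z\,\|(J_+-z)^{-1}\delta_0\|^2$ gives the lower bound). Two small points: first, your verification of the ratio identity ``by substitution'' shows consistency of the Riccati recursion but the identification with $m_+$ still needs uniqueness of the $\ell^2$ solution at $\im z>0$ --- a one-line standard fact, worth saying; likewise $\nu^{(N)}_\beta\to\nu_\beta$ a.e.\ deserves the one-line Birkhoff-on-moments argument with $O(1)$ boundary corrections. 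Second, you are actually \emph{more} careful than the statement you were given: the lemma's ``for every $z\in\C$'' is literally meaningful only for \eqref{eq:thouless} (which your subharmonicity argument, i.e.\ Craig--Simon, does deliver everywhere), while \eqref{eq:egamma} makes sense for $\im z\neq 0$ and, via boundary values, for a.e.\ real $z$ --- exactly the reading you adopt, and consistent with how the paper itself only ever uses \eqref{eq:egamma} at $E+\I\eps$ and passes to the limit via \eqref{eq:limegamma}. (Also, the $\mathcal{P}$ in \eqref{eq:egamma} is a typo for $\mathcal{J}$, as your rewriting tacitly assumes.)
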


\eqref{eq:thouless} is known as the \textit{Thouless formula}. It implies that
\be\label{eq:limegamma}
 \lim_{\eps\to 0} \gamma_{\beta}(E + i \eps) = \gamma_{\beta}(E)
\ee
for every $E\in\R$ by monotone convergence.

We now make the connection, to the usual definition of ergodic
Jacobi operators (see also Section~2 in \cite{cdk}).
Let $(\Omega, T, \mu)$ be an ergodic
dynamical system, and $a: \Omega \to (0,\infty)$ and $b:\Omega\to\R$
are measurable functions satisfying
\be
 \frac{1}{C_0} \leq a(\omega) \leq C_0,\quad -C_0 \leq b(\omega) \leq C_0
\ee
for almost every $\omega$. Then we can define a map
\be
 f: \Omega \to \mathcal{J}
\ee
by $f(\omega)$ being the Jacobi operator with coefficients
\be
 \{(a(T^n \omega), b(T^n \omega)\}_{n\in\Z}.
\ee
Introduce a measure $\beta$ on $\mathcal{J}$ given by
\be
 \beta(A) = \mu(f^{-1}(A))
\ee
for Borel subsets $A \subseteq \mathcal{J}$. Then the usual
definitions of the Lyapunov exponent and the integrated
density of states will just be $\gamma_\beta$ and $k_{\beta}$.

%
%
%

\section{Statement of the results}
\label{sec:results}

For a Jacobi operator $J \in \mathcal{J}$ and a sequence $N_j \to \infty$,
we introduce the Lyapunov exponent by
\be
 \ol{L}(z,J,\{N_j\}) = \limsup_{j\to\infty} \frac{1}{N_j} \log\left\|\prod_{n=N_j-1}^{0} \frac{1}{a(n)} \begin{pmatrix} z - b(n) & 1 \\ a(n)^2 & 0 \end{pmatrix} \right\|,
\ee
where $J = (a,b)$, and the integrated density of states by
\be
 k(E, J, \{N_j\}) = \lim_{j\to\infty} \frac{1}{N_j} \tr(P_{(-\infty,E)}(J_{[0,N_j-1]})),
\ee
where the limit is assumed to exist.

\begin{theorem}\label{thm:main}
 Assume that we have a measure $\alpha$ on $\mathcal{E}$ such that for $N_j \to \infty$
 \be\label{eq:asmulimit}
  \lim_{j\to\infty} \frac{1}{N_j} \sum_{n=0}^{N_j-1} \delta_{J^{(n)}} = \int_{\mathcal{E}} \beta d\alpha(\beta)
 \ee
 then
 \be\label{eq:olL}
  \ol{L}(z,J,\{N_j\}) = \int_{\mathcal{E}} \gamma_{\beta} (z) d\alpha(\beta)
 \ee
 for $\im(z) > 0$ and almost every $z\in\R$, and
 \be
  k(E,J,\{N_j\}) = \int_{\mathcal{E}} k_{\beta} (E) d\alpha(\beta).
 \ee
\end{theorem}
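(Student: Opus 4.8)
The plan is to prove the statement about the integrated density of states first, recast it as weak convergence of the finite-volume density-of-states measures, and then deduce the Lyapunov statement from it via a finite-volume form of the Thouless formula \eqref{eq:thouless}.

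First I would record a continuity fact: for every continuous $g:\R\to\R$ the map $J\mapsto\spr{\delta_0}{g(J)\delta_0}$ is continuous on $\mathcal{J}$. For a monomial $g(x)=x^k$ this matrix element is a polynomial in the finitely many coefficients $a(m),b(m)$ with $|m|<k$, hence continuous in the product topology; since every $J\in\mathcal{J}$ has spectrum in $[-3C_0,3C_0]$, a general continuous $g$ is a uniform limit of polynomials there, and the corresponding functions of $J$ converge uniformly. Next I would compare the finite box with the whole line: for $g(x)=x^k$ the entries $\spr{\delta_n}{g(J_{[0,N-1]})\delta_n}$ and $\spr{\delta_n}{g(J)\delta_n}$ coincide for all $n$ at distance more than $k$ from $\{0,N-1\}$, since the relevant walks do not reach the boundary, whence $\frac1N|\tr g(J_{[0,N-1]})-\sum_{n=0}^{N-1}\spr{\delta_n}{g(J)\delta_n}|\le C(k)/N\to0$, and by Weierstrass this persists for every continuous $g$. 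Using the translation identity $\spr{\delta_0}{g(J^{(n)})\delta_0}=\spr{\delta_n}{g(J)\delta_n}$, which matches the window of the finite box, together with the assumed weak-$\ast$ convergence \eqref{eq:asmulimit} applied to the continuous function $J\mapsto\spr{\delta_0}{g(J)\delta_0}$ and the ergodic decomposition \eqref{eq:edc}, one gets
\[
 \frac{1}{N_j}\tr g(J_{[0,N_j-1]})\longrightarrow \int_{\mathcal E}\Big(\int\spr{\delta_0}{g(J)\delta_0}\,d\beta\Big)\,d\alpha(\beta)=\int g\,d\nu,\qquad \nu:=\int_{\mathcal E}\nu_\beta\,d\alpha(\beta),
\]
where $\nu_\beta$ is the density-of-states measure of $\beta$, i.e. $\int g\,d\nu_\beta=\int\spr{\delta_0}{g(J)\delta_0}\,d\beta$ and $k_\beta(E)=\nu_\beta((-\infty,E))$. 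Thus the finite-volume counting measures $\nu_{N_j,J}:=\frac1{N_j}\sum_{E\in\sigma(J_{[0,N_j-1]})}\delta_E$ converge weakly to the probability measure $\nu$, and evaluating distribution functions at continuity points of $\nu$ (all $E$ with $\nu(\{E\})=0$, hence all but countably many) yields $k(E,J,\{N_j\})=\nu((-\infty,E))=\int_{\mathcal E}k_\beta(E)\,d\alpha(\beta)$.

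For the Lyapunov exponent with $\im z>0$ I would use the finite-volume Thouless relation. Write $M_N(z,J)$ for the transfer-matrix product appearing in the definitions of $\ol L$ and $\gamma_\beta$. Up to the common factor $(\prod_{n=0}^{N-1}a(n))^{-1}$, the entries of $M_N(z,J)$ are characteristic polynomials of $J_{[0,N-1]}$ with zero, one, or two boundary sites removed, the leading one being $P_N(z):=\det(z-J_{[0,N-1]})=\prod_{E\in\sigma(J_{[0,N-1]})}(z-E)$; for $\im z>0$ all of them share the same exponential growth rate, which gives, uniformly on compact subsets of $\{\im z>0\}$,
\[
 \frac1N\log\|M_N(z,J)\|=-\frac1N\sum_{n=0}^{N-1}\log a(n)+\int\log|z-t|\,d\nu_{N,J}(t)+o(1).
\]
The first term equals $-\int\log a(0)\,dA_{N,J}$ and converges, by \eqref{eq:asmulimit} and \eqref{eq:edc} applied to the continuous bounded function $J\mapsto\log a(0)$, to $\int_{\mathcal E}\log(A_\beta^{-1})\,d\alpha(\beta)$. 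In the second term $t\mapsto\log|z-t|$ is bounded and continuous on $[-3C_0,3C_0]$ when $\im z>0$, so the weak convergence $\nu_{N_j,J}\to\nu$ forces convergence to $\int\log|z-t|\,d\nu=\int_{\mathcal E}\int\log|z-t|\,d\nu_\beta\,d\alpha$. Since both limits exist, the $\limsup$ defining $\ol L$ is a genuine limit, and \eqref{eq:thouless} assembles the two pieces into $\ol L(z,J,\{N_j\})=\int_{\mathcal E}\gamma_\beta(z)\,d\alpha(\beta)$, which is \eqref{eq:olL} for $\im z>0$.

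The passage to almost every real $z$ is where I expect the main difficulty, along with establishing the finite-volume Thouless estimate with uniform $o(1)$ control. On $\R$ the integrand $\log|E-t|$ is no longer bounded, so weak convergence of $\nu_{N_j,J}$ does not transfer directly; instead I would argue potential-theoretically. The logarithmic potentials $\Phi_{N_j}(z):=\int\log|z-t|\,d\nu_{N_j,J}(t)$ are subharmonic, uniformly bounded above on compacts, and converge to $\Phi(z):=\int\log|z-t|\,d\nu(t)$ on $\{\im z>0\}$ by the previous paragraph; standard convergence theorems for logarithmic potentials of weakly convergent measures then give $\Phi_{N_j}\to\Phi$ in $L^1_{\mathrm{loc}}$, hence $\int\log|E-t|\,d\nu_{N_j,J}\to\int\log|E-t|\,d\nu$ for almost every $E\in\R$. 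Combined with the convergence of the $\log a$-term and the boundary relation \eqref{eq:limegamma} for each $\gamma_\beta$, this yields \eqref{eq:olL} for almost every $E\in\R$. The remaining bookkeeping — the translation identity identifying the empirical average over the translates with the diagonal of $g(J)$ on the window of the finite box, and the negligibility of the boundary corrections — is routine.
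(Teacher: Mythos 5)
Your IDS argument and your treatment of \eqref{eq:olL} for $\im z>0$ are essentially sound, and they differ from the paper in an acceptable way: the paper obtains the weak convergence $\nu_{N_j}\to\nu$ from the potentials $\int\log|z-t|\,d\nu_{N_j}$ separating points (via the $\log|c(z,N)|$ identity), and it identifies $\ol{L}(z)$ for $\im z>0$ through the half-line Weyl solution $u_+(z,N)=(-1)^N\prod a(n)m_+(z,J^{(n)})$ plus the Ruelle--Oseledec theorem, whereas you go through trace moments and directly through determinants. Your claim that all four entries of $M_N(z)$ have the same exponential rate does need justification, but it is available: by Cramer's rule the ratios of the relevant characteristic polynomials are Green's-function entries of $J_{[0,N-1]}$, bounded by $1/\im z$ (this is exactly the content of the paper's lemma on $m_N(z)$), so $\|M_N(z)\|\asymp|c(z,N)|$ up to $z$-dependent constants and the finite-volume Thouless relation holds for $\im z>0$.

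The genuine gap is in your final paragraph, the passage to almost every real $E$, and it is twofold. First, $L^1_{\mathrm{loc}}(\C)$ convergence of the subharmonic potentials $\Phi_{N_j}$ gives at best planar-a.e.\ convergence along a subsequence and says \emph{nothing} about a.e.\ $E\in\R$, since $\R$ has planar Lebesgue measure zero; the correct tool is the envelope theorem for potentials of weakly convergent measures with uniformly compact support, which yields $\int\log|E-t|\,d\nu_{N_j}\to\int\log|E-t|\,d\nu$ for quasi-every (hence a.e.) real $E$ --- this is precisely what the paper invokes as Theorem~A.7 of \cite{siem} to get \eqref{eq:thouless3} on $\R$. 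Second, and more seriously, even granting a.e.\ convergence of the potentials on $\R$, your finite-volume Thouless relation with uniform $o(1)$ fails for real $E$: the Green's-function bounds blow up as $\im z\to0$, and at an eigenvalue $E$ of $J_{[0,N-1]}$ one has $c(E,N)=0$ while $\|M_N(E)\|\geq1$ (the one-step matrices have determinant $-1$). So from $|c(E,N_j)|\leq\mathrm{const}\cdot\|M_{N_j}(E)\|$ you only obtain the \emph{lower} bound $\ol{L}(E,\{N_j\})\geq\log(A^{-1})+\int\log|E-t|\,d\nu$ a.e.; the matching upper bound for real $E$ does not follow from anything you wrote, since a priori $\limsup_j\frac{1}{N_j}\log\|M_{N_j}(E)\|$ could exceed the boundary value of its harmonic majorant on a set of positive measure. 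The paper closes this with a separate idea: $\ol{L}(z,\{N_j\})$ is a submean function dominated by the subharmonic function $\log(A^{-1})+\int\log|z-t|\,d\nu$ in the upper half plane, and Theorem~1.1 of Craig--Simon \cite{cs2} then gives the upper bound for \emph{every} $E\in\R$, which is what makes \eqref{eq:Lepsto0} and hence \eqref{eq:olL} a.e.\ on $\R$ work. Your proposal is missing this Craig--Simon step (or any substitute for it), so the real-$E$ half of the theorem is not established.
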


The statement is actually stronger, since one may replace the almost every
by quasi-every (in the sense of potential theory). Theorem~\ref{thm:int1} is now an easy corollary.

\begin{corollary}
 Let $A \subseteq \R$ be a set of positive measure. Assume that
 \be
  \ol{L}(E,J,\{N_j\}) = 0
 \ee
 for almost every $E \in A$. Then $\mu$ almost every $J$ is
 reflectionless on $A$.
\end{corollary}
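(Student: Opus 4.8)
The plan is to push the vanishing of $\ol{L}$ down onto the ergodic components via Theorem~\ref{thm:main}, apply Kotani theory (Theorem~\ref{thm:kot}) to each component separately, and then reassemble everything using the ergodic decomposition \eqref{eq:edc}; throughout, $\mu = \int_{\mathcal{E}} \beta\, d\alpha(\beta)$ denotes the limiting measure from \eqref{eq:asmulimit}. Combining the hypothesis with \eqref{eq:olL}, which is valid for almost every real $E$, we obtain that for almost every $E \in A$
\be
 \int_{\mathcal{E}} \gamma_{\beta}(E)\, d\alpha(\beta) = \ol{L}(E,J,\{N_j\}) = 0.
\ee
Since the Lyapunov exponent satisfies $\gamma_{\beta}(E) \geq 0$, the vanishing of this integral forces $\gamma_{\beta}(E) = 0$ for $\alpha$-almost every $\beta$, for each such $E$.

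The next step is to exchange the two occurrences of ``almost every''. Regarding $\{(\beta,E):\ \gamma_{\beta}(E) = 0\}$ as a subset of the product of $\alpha$ with Lebesgue measure restricted to $A$, the previous display shows that its complement is a null set, so Fubini yields, for $\alpha$-almost every $\beta$, that $\gamma_{\beta}(E) = 0$ for Lebesgue-almost every $E \in A$. Fix such a $\beta$. Then $A$ lies, up to a set of measure zero, inside $\{E:\ \gamma_{\beta}(E) = 0\}$, and since almost every point of $A$ is a Lebesgue density point of $A$, it follows that $A$ is contained, up to a null set, in the essential closure $\mathcal{Z}$ of $\{E:\ \gamma_{\beta}(E) = 0\}$ appearing in Theorem~\ref{thm:kot}.

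Theorem~\ref{thm:kot} now tells us that for this $\beta$, $\beta$-almost every Jacobi operator is reflectionless on $\mathcal{Z}$, hence in particular on the smaller set $A$, since the condition $m_+(t) = -\ol{m_-(t)}$ holding for almost every $t \in \mathcal{Z}$ holds a fortiori for almost every $t \in A$. Writing $R_A \subseteq \mathcal{J}$ for the set of operators reflectionless on $A$, we have therefore shown $\beta(R_A) = 1$ for $\alpha$-almost every $\beta$, and so
\be
 \mu(R_A) = \int_{\mathcal{E}} \beta(R_A)\, d\alpha(\beta) = 1,
\ee
which is the assertion. The one genuinely technical point is this last display: \eqref{eq:edc} is stated only for continuous $f$, whereas $\chi_{R_A}$ is merely Borel. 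This is handled by the standard fact that two finite Borel measures on the compact metric space $\mathcal{J}$ agreeing on all continuous functions coincide on every Borel set, applied to $\mu$ and $B \mapsto \int_{\mathcal{E}} \beta(B)\, d\alpha(\beta)$. One must also check that $R_A$ is Borel and that $\beta \mapsto \beta(R_A)$ is measurable; the former holds because the half-line $m$-functions $m_{\pm}(t,J)$ depend measurably on $J$, so the reflectionless condition cuts out a Borel set, and the latter is part of the disintegration underlying \eqref{eq:edc}.
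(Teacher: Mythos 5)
Your proposal is correct and follows essentially the same route as the paper: apply \eqref{eq:olL} together with $\gamma_{\beta}(E)\geq 0$ and Fubini to get $\gamma_{\beta}(E)=0$ for a.e.\ $E\in A$ for $\alpha$-a.e.\ $\beta$, then invoke Kotani theory (Theorem~\ref{thm:kot}). The paper's proof is just a terser version of yours; the extra details you supply (the density-point argument placing $A$ in the essential closure, and the extension of \eqref{eq:edc} from continuous functions to the Borel set $R_A$) are exactly the steps the paper leaves implicit, and they are handled correctly.
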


\begin{proof}
 Since $\gamma_{\beta}(E) \geq 0$ for every $\beta$, we can conclude by
 \eqref{eq:olL} that for $\alpha$ almost every $\beta$, we have
 $$
  \gamma_{\beta}(E) = 0
 $$
 for almost every $E \in A$. The result now follows by Theorem~\ref{thm:kot}.
\end{proof}

We also obtain the following probabilistic version of
the Denisov--Rakhmanov--Remling theorem (see \cite{den}, \cite{rak}, \cite{rem}). For this recall, that a set
$\mathfrak{e}$ is called a finite gap set, if
\be
 \mathfrak{e} = [E_0, E_1] \cup [E_2, E_3] \cup \dots \cup [E_{2g+1}, E_{2g+2}].
\ee
Furthermore, one has a finite dimensional torus $\mathcal{T}(\mathfrak{e})$
of reflectionless Jacobi-operators which have spectrum $\mathfrak{e}$.
We have that

\begin{corollary}\label{cor:drr}
 Let $\mathfrak{e}$ be a finite gap set, and assume that
 \be
  \sigma_{ess} (J) = \mathfrak{e}
 \ee
 and $\ol{L}(E, \{N_j\},J) = 0$ for almost every $E \in \mathfrak{e}$,
 then
 \be
  J^{(n)} \to \mathcal{T}(\mathfrak{e})
 \ee
 in probability along $N_j$, where $\mathcal{T}(\mathfrak{e})$ denotes the isospectral
 torus.
\end{corollary}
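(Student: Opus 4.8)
The plan is to combine Theorem~\ref{thm:main} with the Kotani/Remling-type structure built into the preceding results, and then to identify the limiting support with the isospectral torus. First I would apply the Corollary preceding this statement (the reflectionless conclusion): since $\ol{L}(E,\{N_j\},J) = 0$ for almost every $E \in \mathfrak{e}$, it follows that $\mu$ almost every $\ti{J}$ is reflectionless on $\mathfrak{e}$, where $\mu = \lim_{j\to\infty} A_{N_j,J}$ is the limit measure along $N_j$ (passing to a further subsequence if necessary to guarantee that the average converges, using compactness of $\mathcal{M}^1$). Second, I would invoke Lemma~\ref{lem:sigess} together with the hypothesis $\sigma_{\mathrm{ess}}(J) = \mathfrak{e}$ to conclude that $\sigma_{\mathrm{ess}}(\ti{J}) \subseteq \mathfrak{e}$ for $\mu$ almost every $\ti{J}$.

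The key structural input is the classification of reflectionless operators on a finite gap set. For a finite gap set $\mathfrak{e}$, a Jacobi operator that is reflectionless on $\mathfrak{e}$ and whose essential spectrum is contained in $\mathfrak{e}$ must in fact lie on the isospectral torus $\mathcal{T}(\mathfrak{e})$; this is the content of the finite-gap analogue of the Denisov--Rakhmanov--Remling theorem. I would therefore argue that the two facts established above force $\ti{J} \in \mathcal{T}(\mathfrak{e})$ for $\mu$ almost every $\ti{J}$. Concretely, reflectionlessness on all of $\mathfrak{e}$ pins down the $m$-functions, hence the spectral data, so that $\sigma(\ti{J}) = \mathfrak{e}$ and $\ti{J}$ is one of the reflectionless operators parametrized by the torus. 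Consequently $\mathrm{supp}(\mu) \subseteq \mathcal{T}(\mathfrak{e})$, since $\mathcal{T}(\mathfrak{e})$ is closed and carries full $\mu$-measure.

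Third, with $\mathcal{T}(\mathfrak{e})$ identified as a support of $\mu$, I would invoke Lemma~\ref{lem:convtosupp}: since $\mu = \lim_{j\to\infty} A_{N_j,J}$ and $\mathcal{T}(\mathfrak{e})$ is a support for $\mu$, the translates $J^{(n)}$ converge to $\mathcal{T}(\mathfrak{e})$ along $N_j$ in probability. This is exactly the desired conclusion. One subtlety to handle at the outset: the hypothesis only provides the sequence $N_j$, and to form $\mu$ I need the averages $A_{N_j,J}$ themselves to converge weak-$\ast$; by compactness I may extract a subsequence along which they do, and since the final statement is about convergence in probability along $N_j$, I should verify that the conclusion is insensitive to this extraction, or equivalently argue that every weak-$\ast$ limit point of $A_{N_j,J}$ is supported on $\mathcal{T}(\mathfrak{e})$, which then yields convergence in probability along the full sequence $N_j$.

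The main obstacle I expect is the second step, namely establishing rigorously that a reflectionless operator on $\mathfrak{e}$ with $\sigma_{\mathrm{ess}} \subseteq \mathfrak{e}$ lies on the isospectral torus $\mathcal{T}(\mathfrak{e})$. The reflectionless condition as stated in the excerpt is an almost-everywhere condition on $\mathfrak{e}$, and one must upgrade this (together with the essential spectrum constraint) to the full finite-gap spectral characterization; this requires the finite-gap reflectionless classification theory (Sodin--Yuditskii / Remling), and care is needed to rule out eigenvalues in the gaps and to handle the boundary behavior of the $m$-functions on the band edges. The other steps are essentially direct appeals to the already-established lemmas.
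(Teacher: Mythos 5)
Your overall architecture --- extract a weak-$\ast$ limit $\mu$ of the averages $A_{N_j,J}$, show that $\mathcal{T}(\mathfrak{e})$ is a support of $\mu$, and conclude via Lemma~\ref{lem:convtosupp}, handling the subsequence extraction by noting it suffices that \emph{every} limit point be supported on the torus --- is the same as the paper's, which runs the identical argument by contradiction. The genuine gap is your key structural step. It is false, as stated, that a Jacobi operator which is reflectionless on $\mathfrak{e}$ and satisfies $\sigma_{\mathrm{ess}}(\ti{J}) \subseteq \mathfrak{e}$ must lie on $\mathcal{T}(\mathfrak{e})$: operators on the torus have spectrum \emph{exactly} $\mathfrak{e}$, whereas your two hypotheses permit isolated eigenvalues in the gaps and outside $[E_0, E_{2g+2}]$. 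Soliton-type examples (inserting an eigenvalue into a finite-gap background by double commutation, see the commutation methods in \cite{tjac}) are reflectionless on $\mathfrak{e}$ with $\sigma_{\mathrm{ess}} = \mathfrak{e}$ but carry point spectrum in a gap, hence do not lie on $\mathcal{T}(\mathfrak{e})$. You flagged ``eigenvalues in the gaps'' as needing care, but the inputs you invoke cannot exclude them: Lemma~\ref{lem:sigess} controls only the essential spectrum, and reflectionlessness on $\mathfrak{e}$ constrains the $m$-functions only on $\mathfrak{e}$, so the ``spectral data are pinned down'' claim does not follow. As written, the second step of your proof fails.

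The paper closes exactly this hole by exploiting the shift invariance of $\mu$ rather than a deterministic classification: it passes to a subsequence where \eqref{eq:asmulimit} holds, i.e.\ it decomposes $\mu = \int_{\mathcal{E}} \beta\, d\alpha(\beta)$ into \emph{ergodic} components, and deduces from Theorem~\ref{thm:main} that $\gamma_\beta = 0$ almost everywhere on $\mathfrak{e}$ for $\alpha$-almost every $\beta$. For an ergodic family the almost sure spectrum $\Sigma(\beta)$ has no isolated points and coincides almost surely with the essential spectrum, so Lemma~\ref{lem:sigess} together with $\mathfrak{e} \subseteq \Sigma(\beta)$ (from vanishing Lyapunov exponent) yields $\sigma(\ti{J}) = \Sigma(\beta) = \mathfrak{e}$ exactly, with no gap eigenvalues almost surely; then Kotani theory (Theorem~\ref{thm:kot}) and the classification of reflectionless ergodic operators with spectrum $\mathfrak{e}$ (Section~8 of \cite{tjac}) place $\beta$ on $\mathcal{T}(\mathfrak{e})$. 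Alternatively, you could repair your deterministic route by proving the stronger right-limit inclusion $\sigma(\ti{J}) \subseteq \sigma_{\mathrm{ess}}(J)$ for every $\ti{J} \in \omega(J)$ --- a standard fact which, combined with $\mathrm{supp}(\mu) \subseteq \omega(J)$, does rule out gap eigenvalues --- but that is strictly more than Lemma~\ref{lem:sigess} provides, and your proposal neither states nor proves it.
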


\begin{proof}
 Assume there is a subsequence of $N_j$ such that convergence
 in probability does not hold.  By passing to a  further subsequence of $N_j$,
 we may assume that \eqref{eq:asmulimit} holds.
 Since $\gamma_{\beta}(z) \geq 0$ everywhere, it follows from
 our result that for $\alpha$ almost every $\beta$, we have that
 $$
  \Sigma(\beta) = \mathfrak{e}
 $$
 and that $\gamma_{\beta} = 0$ on $\mathfrak{e}$. Hence, Kotani's theory
 implies that these operators are reflectionless on $\mathfrak{e}$,
 which implies in turn that $\beta$ is supported on $\mathcal{T}(\mathfrak{e})$
 (see e.g. Section~8 in \cite{tjac}).
 This is a contradiction by Lemma~\ref{lem:convtosupp}.
\end{proof}

%
%
%
%

\section{The family of potentials}
\label{sec:rho}

In this section, we examine the family of Schr\"odinger operators
given by \eqref{eq:Hrho} in some detail.
Introduce for a continuous function $f: [0,1] \to \R$ and
$r < \rho < r + 1$, where $r$ is a nonnegative integer, the sequences
\be
 a(n) = 1,\quad b(n) = f(n^\rho \pmod{1}).
\ee
Denote by $J$ the associated Jacobi operator.
For $\alpha \in [0,1] \backslash \mathbb{Q}$, introduce the
skew-shift $T_{\alpha}: [0,1]^r \to [0,1]^r$ by
\be
 (T_{\alpha} \omega)_k = \begin{cases} \omega_0 + \alpha & k = 0\\ \omega_k + \omega_{k-1} & 1 \leq k \leq r -1.\end{cases}
\ee
Similarly as in the last part of Section~\ref{sec:ergodic}
we let $\beta_{\alpha}$ be the measure on $\mathcal{J}$ given by
the pushforward of the Lebesgue measure on $[0,1]^r$ under
\be
 [0,1]^r \ni \omega \mapsto \{1, f((T^n \omega)_r)\}_{n\in\Z} \in \mathcal{J}.
\ee

\begin{lemma}
 We have that
 \be
  \lim_{N\to\infty} \frac{1}{N} \sum_{n=0}^{N-1} \delta_{J^{(n)}} = \int_{0}^{1} \beta_{\alpha} d\alpha
 \ee
 in the weak $\ast$ topology.
\end{lemma}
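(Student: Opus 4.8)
The plan is to verify weak-$\ast$ convergence by testing against compactly supported functions and reducing everything to the skew-shift structure hidden in the finite differences of $n^\rho$. By Lemma~\ref{lem:compsupp} it suffices to show, for every $F:\mathcal{J}\to\R$ depending only on the coefficients in a window $[-K,K]$, that
\[
 \frac{1}{N}\sum_{n=0}^{N-1} F(J^{(n)}) \longrightarrow \int_0^1\Big(\int F\,d\beta_\alpha\Big)d\alpha .
\]
Since $a\equiv 1$, such an $F$ is a fixed continuous function of the phases $\{(n+m)^\rho \bmod 1\}_{|m|\le K}$.

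First I would expose the skew-shift. Writing $g(n)=n^\rho$ and $\Delta$ for the forward difference, the exact recursion $\Delta^k g(n+1)=\Delta^k g(n)+\Delta^{k+1}g(n)$ shows that the vector $\omega(n)=(\Delta^{r-1}g(n),\dots,\Delta^0 g(n))\bmod 1\in[0,1]^r$ evolves by the skew-shift $T_{\alpha_n}$ whose top rotation is $\alpha_n:=\Delta^r g(n)\bmod 1$; and this rotation drifts only slowly, since $\Delta^{r+1}g(n)=O(n^{\rho-r-1})\to 0$ because $\rho<r+1$. The distinguished bottom coordinate of $\omega(n)$ is exactly $g(n)\bmod 1=n^\rho\bmod 1$, so reading off the $|m|\le K$ neighbours of the orbit gives $F(J^{(n)})=\Phi(\alpha_n,\omega(n))+o(1)$, where $\Phi(\alpha,\omega)$ is the continuous function obtained by evaluating $F$ on the skew-shift orbit $\{T_\alpha^m\omega\}$, and $\int F\,d\beta_\alpha=\int_{[0,1]^r}\Phi(\alpha,\omega)\,d\omega$.

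The core is a two-scale argument matching one soft dynamical input against one soft arithmetic input. For a fixed block length $L$ set $\Psi_L(\alpha,\omega)=\frac1L\sum_{j=0}^{L-1}\Phi(\alpha,T_\alpha^j\omega)$ and $G_L(\alpha)=\sup_\omega|\Psi_L(\alpha,\omega)-\int F\,d\beta_\alpha|$; both are continuous on $[0,1]$. For every irrational $\alpha$ the step-$r$ skew-shift $T_\alpha$ is uniquely ergodic with Lebesgue as its only invariant measure (Furstenberg), so $G_L(\alpha)\to 0$ pointwise, and dominated convergence gives $\int_0^1 G_L\,d\alpha\to 0$. Now partition $\{0,\dots,N-1\}$ into consecutive windows of length $L$. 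On a window with base point $n_0$, freezing the rotation at $\bar\alpha=\alpha_{n_0}$ perturbs the orbit by at most a polynomial in $L$ times $\sup_{n\ge n_0}|\Delta^{r+1}g(n)|$, which is $o(1)$ for fixed $L$ as $n_0\to\infty$; hence the block average of $F(J^{(n)})$ equals $\Psi_L(\bar\alpha,\omega(n_0))+o(1)$. Averaging over windows and using that $\alpha_{n_0}=\Delta^r g(n_0)\sim \rho(\rho-1)\cdots(\rho-r+1)\,n_0^{\rho-r}\bmod 1$ is equidistributed (Weyl's theorem for fractional powers, as $0<\rho-r<1$), the continuity of $\alpha\mapsto\int F\,d\beta_\alpha$ and of $G_L$ yields both $\frac1p\sum_W\int F\,d\beta_{\alpha_{n_0(W)}}\to\int_0^1\int F\,d\beta_\alpha\,d\alpha$ and $\frac1p\sum_W G_L(\alpha_{n_0(W)})\to\int_0^1 G_L\,d\alpha$. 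Letting $N\to\infty$ for fixed $L$ and then $L\to\infty$ closes the estimate.

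The main obstacle I anticipate is the interchange of the two scales: unique ergodicity of $T_\alpha$ delivers only a qualitative, $\alpha$-dependent rate that degenerates near rationals, so one cannot simply let the block length grow inside each window. The device above is designed to sidestep this: by freezing $L$, all non-uniformity is packaged into the continuous function $G_L$ and then disposed of through the averaged bound $\int_0^1 G_L\,d\alpha\to 0$ together with equidistribution of the window representatives, so the only limits taken are $N\to\infty$ (which kills the finite-difference tail and the within-window drift, both $o(1)$ once the base point is large) followed by $L\to\infty$. The points requiring genuine care are the uniformity of the within-window error bounds in the base point and the verification that $G_L$ and $\alpha\mapsto\int F\,d\beta_\alpha$ are continuous, so that Weyl equidistribution may legitimately be applied to them.
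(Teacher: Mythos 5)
Your argument is correct in outline but takes a genuinely different route from the paper. The paper's own proof is purely distributional: after the same reduction via Lemma~\ref{lem:compsupp}, it notes (citing Lemmas~2.1 and~2.3 of \cite{k1}) that the window $\{(n+j)^\rho\}_{j=-K}^{K}$ is, up to errors vanishing as $n\to\infty$, a degree-$r$ polynomial in $j$ determined by its $r+1$ values at $j=0,\dots,r$, and that the coefficient vector of this polynomial equidistributes; since a window of a skew-shift orbit is likewise a degree-$r$ polynomial in $j$ whose coefficients are uniformly distributed under $d\omega\, d\alpha$, the two window distributions have the same limit, and no dynamics is invoked at all. You instead run a two-scale block argument: unique ergodicity of each $T_\alpha$ (Furstenberg) controls block averages, the drift bound $\Delta^{r+1}g(n)=O(n^{\rho-r-1})$ justifies freezing the top rotation on blocks of fixed length $L$, and one-dimensional equidistribution of $\Delta^r g(n)\sim c\, n^{\rho-r} \pmod 1$ (Fej\'er/Weyl, $0<\rho-r<1$) performs the $d\alpha$ average; packaging the $\alpha$-dependent rates into $\int_0^1 G_L\, d\alpha \to 0$ is exactly the right device to avoid growing block lengths, and it buys you a self-contained proof modulo classical facts, where the paper's shorter argument leans on the equidistribution lemmas of \cite{k1}.

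One step is false as literally stated and needs a standard patch. When $f(0)\neq f(1)$ --- precisely the case the paper is most interested in --- the observable $\Phi$ is \emph{discontinuous} on $[0,1]\times\T^r$ (at phases $\equiv 0$), so: $G_L$ need not be continuous in $\alpha$; unique ergodicity does not directly give $G_L(\alpha)\to 0$, since uniform convergence of Birkhoff averages is guaranteed only for continuous observables; and your ``perturb the phases by $o(1)$'' steps do not yield $o(1)$ changes in value near the discontinuity set. The fix: the discontinuity set $\{(\alpha,\omega):\ (T_\alpha^m\omega)_{r-1}\equiv 0 \text{ for some } |m|\leq K\}$ is closed and Lebesgue-null, so $\Phi$ can be sandwiched, $\Phi_\delta^-\leq\Phi\leq\Phi_\delta^+$, between continuous functions with $\int(\Phi_\delta^+-\Phi_\delta^-)\,d\omega\,d\alpha\leq\delta$. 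Run your entire two-scale scheme for $\Phi_\delta^\pm$, for which every continuity and uniform-continuity claim is valid, note that $\alpha\mapsto\int F\,d\beta_\alpha$ is itself genuinely continuous (dominated convergence, since for fixed $\alpha$ the bad phase set is $\omega$-null), and let $\delta\to 0$ to pinch the upper and lower bounds. With this insertion your proof is complete; without it, the assertions ``both are continuous on $[0,1]$'' and ``$G_L(\alpha)\to 0$ pointwise'' are unjustified exactly in the regime the lemma must cover.
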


\begin{proof}
 By Lemma~\ref{lem:compsupp} it suffices to check convergence for
 compactly supported functions $g$. Since, every such $g$ will be a continuous
 function of $\{b(n)\}_{n = -K}^{K}$ for some $K \geq 1$ it suffices
 to check that these converge. Since, $f$ is also continuous.
 It suffices to show that
 $$
  \{(n + j)^\rho\}_{j=-K}^{K}
 $$
 has the same distribution in $[0,1)^{2K + 1}$ as the orbits of
 the skew-shifts would as $n \to \infty$.
 
 Furthermore, by Lemma~2.1 in \cite{k1} and an easy argument we see
 that both $(n + j)^\rho$ and $(T^{n + j} \omega)_r$ are essentially
 given by degree $r$ polynomials, and thus uniquely determined by
 $$
  \{(n + j)^\rho\}_{j=0}^{r}\quad\text{and}\quad\{(T^{n + j} \omega)_r\}_{j=0}^{r}.
 $$
 Now Lemma~2.3 in \cite{k1} implies that the coefficients of the
 first polynomial are uniformly distributed, and a quick computation
 shows the same for the skew-shift, finishing the proof.
\end{proof}

This lemma combined with Theorem~\ref{thm:main} implies

\begin{corollary}\label{cor:lyaprho}
 For almost every $E$, we have that
 \be
  L(E) = \int_{0}^{1} \gamma_{\beta_{\alpha}}(E) d\alpha.
 \ee
\end{corollary}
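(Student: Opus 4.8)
The plan is to combine the preceding lemma with Theorem~\ref{thm:main}. The first task is to recognize the right-hand side $\int_0^1 \beta_\alpha\, d\alpha$ as a genuine integral over the ergodic measures $\mathcal{E}$, so that it has exactly the form $\int_{\mathcal{E}} \beta\, d\alpha(\beta)$ required in the hypothesis \eqref{eq:asmulimit}. Concretely, I would regard the ``measure $\alpha$ on $\mathcal{E}$'' appearing there as the pushforward of Lebesgue measure on $[0,1]$ under the map $\alpha \mapsto \beta_\alpha$, and for this to make sense I must check that $\beta_\alpha \in \mathcal{E}$ for Lebesgue-almost every $\alpha$, together with measurability of $\alpha \mapsto \beta_\alpha$.

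The ergodicity of $\beta_\alpha$ is where the real content sits. For irrational $\alpha$ the skew-shift $T_\alpha$ on $[0,1]^r$ is ergodic (indeed uniquely ergodic, by Furstenberg) with respect to Lebesgue measure. The map $F_\alpha : [0,1]^r \to \mathcal{J}$, $\omega \mapsto \{1, f((T_\alpha^n \omega)_r)\}_{n\in\Z}$ satisfies $\hat S \circ F_\alpha = F_\alpha \circ T_\alpha$, so it intertwines $([0,1]^r, T_\alpha)$ with $(\mathcal{J}, \hat S)$; hence $\beta_\alpha = (F_\alpha)_\ast(\text{Leb})$ is a shift-invariant measure, and as the pushforward of an ergodic measure under an equivariant map it is again ergodic, i.e.\ $\beta_\alpha \in \mathcal{E}$. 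Since the rationals form a Lebesgue null set they do not affect $\int_0^1 \beta_\alpha\, d\alpha$, and we may view this object as $\int_{\mathcal{E}} \beta\, d\alpha(\beta)$ with $\alpha$ the stated pushforward. I expect this identification, rather than the subsequent bookkeeping, to be the main obstacle, because it requires invoking the ergodicity of the skew-shift and verifying the equivariance of $F_\alpha$ (both routine but essential to placing us inside the hypotheses of Theorem~\ref{thm:main}).

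With this in hand, the preceding lemma states precisely that \eqref{eq:asmulimit} holds along the full sequence $N_j = N$, so Theorem~\ref{thm:main} yields $\ol{L}(E, J, \{N\}) = \int_0^1 \gamma_{\beta_\alpha}(E)\, d\alpha$ for almost every $E \in \R$; the exceptional null set of the corollary is inherited directly from the theorem. Finally I would upgrade the $\limsup$ in $\ol{L}$ to the genuine limit defining $L(E)$: since the measure convergence holds along the full sequence, it holds along every subsequence $N_j$, whence Theorem~\ref{thm:main} forces $\ol{L}(E, J, \{N_j\}) = \int_0^1 \gamma_{\beta_\alpha}(E)\, d\alpha$ for \emph{every} subsequence. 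If the $\liminf$ of $\frac{1}{N}\log\|\cdots\|$ were strictly smaller, a subsequence realizing it would have $\limsup$ strictly below $\int_0^1 \gamma_{\beta_\alpha}(E)\, d\alpha$, contradicting this. Hence the limit exists and equals $\int_0^1 \gamma_{\beta_\alpha}(E)\, d\alpha$, giving $L(E) = \int_0^1 \gamma_{\beta_\alpha}(E)\, d\alpha$ for almost every $E$.
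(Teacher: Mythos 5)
Your overall route is the paper's route: the paper proves this corollary in one line, ``This lemma combined with Theorem~\ref{thm:main} implies,'' and your first two paragraphs correctly supply what that one line suppresses --- namely that $\int_0^1 \beta_\alpha\, d\alpha$ is a legitimate instance of the hypothesis \eqref{eq:asmulimit}. Your verification that $\beta_\alpha \in \mathcal{E}$ for irrational $\alpha$ (ergodicity of the skew-shift, equivariance of $\omega \mapsto \{1, f((T_\alpha^n\omega)_r)\}_{n\in\Z}$, pushforward of an ergodic measure under an equivariant map is ergodic, rationals are null) is correct and is exactly the implicit content of the paper's argument. Up to that point the proposal is sound and matches the paper.

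The final paragraph, however, contains a genuine gap. You attempt to upgrade the $\limsup$ in $\ol{L}$ to an honest limit by arguing: for every subsequence $N_j$, Theorem~\ref{thm:main} gives $\ol{L}(E,J,\{N_j\}) = \int_0^1 \gamma_{\beta_\alpha}(E)\,d\alpha$ for almost every $E$, so a subsequence realizing a strictly smaller $\liminf$ yields a contradiction. The quantifiers do not permute: the exceptional null set in Theorem~\ref{thm:main} depends on the subsequence, while the subsequence realizing the $\liminf$ of $\frac{1}{N}\log\|\cdots\|$ depends on $E$. Since there are uncountably many subsequences, the union of the corresponding null sets need not be null, and you cannot guarantee that your fixed $E$ lies outside the null set attached to its own liminf-realizing subsequence. (For $\im z > 0$ the limit does exist --- that is the content of \eqref{eq:olLequp} --- but on the real axis the machinery behind Theorem~\ref{thm:existlyap}, i.e.\ subharmonicity and potential-theoretic envelope theorems, controls only the $\limsup$.) The paper avoids this issue entirely: throughout Sections~\ref{sec:results} and \ref{sec:lyap} the Lyapunov exponent is $\ol{L}$, defined as a $\limsup$, and the $L(E)$ in the corollary should be read the same way (as in the displayed computation in the proof of Theorem~\ref{thm:main}, where $L$ and $\ol{L}$ are used interchangeably). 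With that reading your proof is complete after the second paragraph, and the last paragraph should simply be deleted; with your stronger reading of $L(E)$ as a genuine limit, the claim is not established by your argument.
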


This corollary resolves the discrepancy between the numerical
and perturbation theoretical computations in \cite{gf} and shows
in particular that the Lyapunov exponent only depends on the
integer part of $\rho$.

In particular, in the case of $r = 0$, that is $0 < \rho < 1$,
one can compute that $\gamma_{\beta_{\alpha}}(E) = 0$ for
exactly
\be
 E \in [-2 + f(\alpha), 2 + f(\alpha)].
\ee
Hence, we see that $L(E) = 0$ for
\be
 E \in [-2 + \max(f), 2 + \min(f)],
\ee
which was first observed by Simon and Zhu in \cite{sz} for continuum Schr\"odinger
operators. We observe further spectral properties in the following result.

\begin{theorem}
 We have that
 \begin{enumerate}
  \item \textbf{Stolz:} If $f$ extends to a smooth function on the circle, then $J$
   has purely absolutely continuous spectrum in $[-2+\max(f), 2+\min(f)]$.
  \item \textbf{Remling's Oracle:} If $f(0) \neq f(1)$, then the absolutely continuous spectrum
   of $J$ is empty.
 \end{enumerate}
\end{theorem}

\begin{proof}
 Part (i) is \cite{s}. Part (ii) follows from Remling's Oracle Theorem,
 which is found in \cite{rem}.
\end{proof}

%
%
%
%

\section{The integrated density of states and the Lyapunov exponent}
\label{sec:lyap}

In this section, we will proof Theorem~\ref{thm:main}. For this
we have to discuss some further properties of the Lyapunov exponent.

We will now assume that we are given a fixed Jacobi operator
$J:\ell^2(\Z) \to \ell^2(\Z)$. We denote by $J_+$ its restriction
to $\ell^2(\Z_+)$, $\Z_+ = \{0,1,2,3,\dots\}$, and by $J_{\Lambda}$
its restriction to $\ell^2(\Lambda)$ for $\Lambda\subseteq \Z$
an interval. Denote by $E_j(\Lambda)$ an increasing enumeration of
the eigenvalues of $J_{\Lambda}$, introduce the density of
states measure $\nu_n$ of $J_{[0,n-1]}$ by
\be\label{eq:defnun}
 \nu_n = \frac{1}{n} \sum_{j=0}^{n-1} \delta_{E_j([0,n-1])}.
\ee
For $\im(z) > 0$, introduce the Weyl--Titchmarsh $m$ function
\be
 m_+(z,J) = \spr{\delta_0}{(J_+ - z)^{-1} \delta_0}.
\ee
We will show the following theorem, which is essential in the proof
of Theorem~\ref{thm:main}. Similar results can be found in
Poltoratski--Remling \cite{pr}.

\begin{theorem}\label{thm:existlyap}
 Given a sequence $N_j \to \infty$, assume that
 \be\label{eq:muconv}
  \lim_{j\to\infty} \frac{1}{N_j} \sum_{n=0}^{N_j-1} \delta_{J^{(n)}} = \mu.
 \ee
 in the weak $\ast$ topology on $\mathcal{M}^1$. Then
 \begin{enumerate}
  \item The integrated density of states measures converge in the
   weak $\ast$ topology
   \be
    \lim_{j\to\infty} \nu_{N_j} = \nu.
   \ee
  \item For $\im(z)> 0$ and $A$ as defined in \eqref{eq:defA}, we have that
   \be\label{eq:Ltointmp}
    \ol{L}(z, \{N_j\}) = \log(A^{-1}) - \int_{\mathcal{J}} \log|m_+(z,J)| d\mu(J).
   \ee
  \item For almost every $E \in \R$, we have that
   \be\label{eq:Lepsto0}
    \ol{L}(E, \{N_j\}) = \lim_{\eps\to 0} \ol{L}(E + i \eps, \{N_j\}).
   \ee
 \end{enumerate}
\end{theorem}

Since the map $(a,b) \mapsto \log(a(0))$ is continuous, we see that
\eqref{eq:muconv} implies
\be\label{eq:defA}
 A := \exp\left(\int_{\mathcal{J}} \log(a(0)) d\mu(J)\right)
  = \lim_{j \to \infty} \exp\left(\frac{1}{N_j} \sum_{n=0}^{N_j+1} \log(a(n))\right),
\ee
where $A$ is the constant in \eqref{eq:Ltointmp}. We are now ready for

\begin{proof}[Proof of the Theorem~\ref{thm:main}]
 We first observe that
 $$
  \log(A^{-1}) = \int_{\mathcal{E}} \log(A_{\beta}^{-1}) d\alpha(\beta).
 $$
 We may compute for almost every $E$, that
 \begin{align*}
  L(E) & = \lim_{\eps \to 0} L(E + i \eps) & \text{by }\eqref{eq:Lepsto0} \\
   & = \lim_{\eps\to 0} \left(\log(A^{-1}) -\int_{\mathcal{J}} \log|m_+(E + i \eps,J)| d\mu(J)\right) & \text{by } \eqref{eq:Ltointmp}\\
   & = \lim_{\eps\to 0} \left(\int_{\mathcal{E}}\log(A_{\beta}^{-1}) - \left(\int_{\mathcal{J}} \log|m_+(E + i \eps,J)| d\beta(J) \right) d\alpha(\beta) \right)
    & \text{by } \eqref{eq:edc} \\
   & = \lim_{\eps\to 0} \left(\int_{\mathcal{E}} \gamma_{\beta}(E + i \eps) d\alpha(\beta) \right) &\text{by } \eqref{eq:egamma} \\
   & = \int_{\mathcal{E}} \gamma_{\beta}(E) d\alpha(\beta) &\text{by } \eqref{eq:limegamma} &.
 \end{align*}
 This implies the first claim.
 The second claim follows by Thouless' formula.
\end{proof}

We now proceed to prove Theorem~\ref{thm:existlyap}.
Introduce by $s$ and $c$ the sine and cosine
solution of $J$ (as a formal difference equation), satisfying the
initial conditions
\be
 \begin{pmatrix} c(z,0) & s(z,0) \\ c(z,-1) & s(z,-1) \end{pmatrix} =
 \begin{pmatrix} 1 & 0 \\ 0 & 1 \end{pmatrix}.
\ee
We observe that, we have that
\be\label{eq:cnsn}
 \begin{pmatrix} c(z,n) & s(z,n) \\ a(n-1) c(z,n-1) & a(n-1) s(z,n-1) \end{pmatrix} =
 \prod_{j=n-1}^{0} \frac{1}{a(j)} \begin{pmatrix} z - b(j) & 1 \\ a(j)^2 & 0 \end{pmatrix}.
\ee
We note that
\be\label{eq:detformula}
 c(z,n) = \frac{\det(z - J_{[0,n-1]})}{\prod_{j=0}^{n-1} a(j)},\quad s(z,n) = \frac{\det(z - J_{[1,n-1]})}{\prod_{j=0}^{n-1} a(j)}.
\ee
For $\im(z) > 0$, we denote by $u_+(z,n)$ the solution of
\be
 H u_+ = zu_+,\quad u_+ \in \ell^2(\Z_+),\quad u_+(z,-1) = 1.
\ee
We then have that $u_+(z,0) = - a(0) m_+(z, J)$. 
We obtain that
\be\label{eq:upexpan}
 u_+(z,N) = (-1)^N \prod_{n=0}^{N} a(n) m_+(z, J^{(n)}).
\ee
Hence, we obtain that

\begin{lemma}\label{lem:up}
 Assume \eqref{eq:muconv}, then for $\im(z) > 0$
 \begin{align}\label{eq:olLequp}
  \ol{L}(z,\{N_j\}) &= - \lim_{j\to\infty} \frac{1}{N_j} \log|u_+(z,N_j)| \\
  \nn& = - \lim_{j\to\infty} \frac{1}{N_j} \log\sqrt{|u_+(z,N_j)|^2 + |u_+(z,N_j+1)|^2}\\
  \nn &= \log(A^{-1}) - \int_{\mathcal{J}} \log|m_+(z,(a,b))| d\mu(a,b).
 \end{align}
\end{lemma}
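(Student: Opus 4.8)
The plan is to establish the three displayed equalities in Lemma~\ref{lem:up} by chaining together the asymptotic expansion \eqref{eq:upexpan} of $u_+$ with the weak $\ast$ convergence hypothesis \eqref{eq:muconv}. The central identity to exploit is
\be
 u_+(z,N) = (-1)^N \prod_{n=0}^{N} a(n) m_+(z, J^{(n)}),
\ee
which lets me convert the growth rate of $u_+$ into an ergodic-type average. Taking logarithms and dividing by $N_j$ turns the product into a Birkhoff-style sum, and the measure $\mu$ from \eqref{eq:muconv} is precisely what governs the limit of such sums.

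First I would prove the last equality. Applying $\frac{1}{N_j}\log|\cdot|$ to \eqref{eq:upexpan} gives
\be
 \frac{1}{N_j}\log|u_+(z,N_j)| = \frac{1}{N_j}\sum_{n=0}^{N_j}\log(a(n)) + \frac{1}{N_j}\sum_{n=0}^{N_j}\log|m_+(z,J^{(n)})|.
\ee
The first sum converges to $\log(A)$ by \eqref{eq:defA}. For the second sum, I want to recognize it as $\int_{\mathcal{J}}\log|m_+(z,J)|\,dA_{N_j,J}(J)$ and pass to the limit using \eqref{eq:muconv}, which would yield $\int_{\mathcal{J}}\log|m_+(z,(a,b))|\,d\mu(a,b)$. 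Combining these and negating gives the claimed formula, and this simultaneously shows the limit defining $\ol{L}$ exists (so the $\limsup$ is a genuine limit). Next I would handle the first equality, relating $\ol{L}(z,\{N_j\})$ to the decay rate $-\frac{1}{N_j}\log|u_+(z,N_j)|$: since $u_+$ is the $\ell^2$ Weyl solution for $\im(z)>0$, it decays at the rate governed by the Lyapunov exponent, while the transfer matrix norm in the definition of $\ol{L}$ grows at the opposite rate, giving the sign flip. The middle equality, replacing $|u_+(z,N_j)|$ by $\sqrt{|u_+(z,N_j)|^2+|u_+(z,N_j+1)|^2}$, is harmless because adding the consecutive term $|u_+(z,N_j+1)|$ changes the logarithm by a term that is $o(N_j)$ after dividing.

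The main obstacle is the integrability of $\log|m_+(z,\cdot)|$, since $\log|m_+|$ is generally unbounded and not continuous on $\mathcal{J}$, so the naive appeal to weak $\ast$ convergence of measures against a continuous bounded test function does not directly apply. For $\im(z)>0$ the function $m_+(z,J)=\spr{\delta_0}{(J_+-z)^{-1}\delta_0}$ is bounded by $1/\im(z)$, which controls $\log|m_+|$ from above, but the issue is a possible singularity where $m_+(z,J)$ approaches $0$, driving $\log|m_+|$ to $-\infty$. I would address this by noting that $\im(z)>0$ forces $\im(m_+(z,J))$ to have a definite sign (the Herglotz property), bounding $|m_+|$ away from $0$ uniformly in a manner depending only on $\im(z)$ and the $C_0$ constraints, so that $\log|m_+(z,\cdot)|$ is in fact bounded and continuous on the compact space $\mathcal{J}$ for each fixed $z$ with $\im(z)>0$. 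Once that uniform two-sided bound is in hand, weak $\ast$ convergence applies directly and all three equalities follow cleanly; the remaining work is the routine bookkeeping of the boundary terms at $n=0$ and $n=N_j$, which contribute negligibly after division by $N_j$.
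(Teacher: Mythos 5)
Your handling of the third and middle equalities is essentially the paper's own argument: the paper likewise takes logarithms in \eqref{eq:upexpan} to define $L_+(z,\{N_j\}) = -\lim_j \frac{1}{N_j}\sum_{n=0}^{N_j-1}\log|a(n)\,m_+(z,J^{(n)})|$, obtains its existence from \eqref{eq:muconv}, \eqref{eq:defA} and continuity of $J\mapsto\log|m_+(z,J)|$, and disposes of the middle expression via the boundary factor $\sqrt{1+|a(N_j+1)m_+(z,J^{(N_j+1)})|^2}$, which is $O(1)$ because $|m_+|\le 1/\im(z)$ --- exactly your bookkeeping. On the integrability point you are in fact more careful than the paper, which silently asserts continuity of $\log|m_+(z,\cdot)|$: your Herglotz lower bound is correct and quantifiable, since $|m_+(z,J)|\ge \im\, m_+(z,J) = \im(z)\,\|(J_+-z)^{-1}\delta_0\|^2 \ge \im(z)\,(3C_0+|z|)^{-2}$ uniformly on $\mathcal{J}$, so $\log|m_+(z,\cdot)|$ is bounded and continuous (continuity in the topology of $\mathcal{J}$ via strong resolvent convergence), and weak $\ast$ convergence applies.

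The genuine gap is the first equality. Your justification --- ``$u_+$ decays at the rate governed by the Lyapunov exponent, while the transfer matrix norm grows at the opposite rate'' --- is a restatement of the claim, not a proof, and it cannot be cited as a standard fact here: $J$ is not ergodic and $\ol{L}$ is a $\limsup$ along a subsequence, so Oseledec's theorem for ergodic cocycles does not apply. One half is elementary: writing $T_{N_j}$ for the transfer matrix product, the paper uses the identity
\be
 \begin{pmatrix} u_+(z,N_j) \\ a(N_j-1)\, u_+(z,N_j-1) \end{pmatrix}
  = T_{N_j} \begin{pmatrix} m_+(z,J) \\ 1 \end{pmatrix},
\ee
and since each one-step factor has determinant $-1$, the singular values of $T_{N_j}$ satisfy $s_1 s_2 = 1$, whence $\|T_{N_j}\| = s_1 = 1/s_2 \ge \|w\|/\|T_{N_j}w\|$ with $w = (m_+(z,J),1)^T$; this gives $\ol{L}(z,\{N_j\}) \ge L_+$. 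The converse inequality $\ol{L} \le L_+$ --- that the norm cannot grow strictly faster than the reciprocal of the Weyl solution's decay --- does \emph{not} follow from the decay of one solution alone (that only bounds $s_2$ above, i.e., $\|T_{N_j}\|$ below), and this is precisely where the paper invokes the Ruelle--Osceledec theorem. To close your argument you would need either to cite that deterministic theorem, as the paper does, or to prove the upper bound directly, e.g., from the uniform hyperbolicity of the cocycle for $\im(z)>0$ (uniform contraction of the Weyl disk). As written, that step is missing.
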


\begin{proof}
 Define $L_+(z,\{N_j\})$ as the limit
 $$
  L_+(z,\{N_j\}) = - \lim_{j \to \infty} \frac{1}{N_j} \sum_{n=0}^{N_j-1} \log|a(n) m_+(z, J^{(n)})|,
 $$
 which exists by continuity of $J \mapsto \log|m_+(z,J)|$, \eqref{eq:muconv}, and \eqref{eq:defA}.
 By \eqref{eq:upexpan}, $|m_+(z,J)| \leq \frac{1}{|\im(z)|}$,
 $$
  \sqrt{|u_+(z,N_j)|^2 + |u_+(z,N_j+1)|^2} = |u_+(z,N_j)| \sqrt{1 + |a(N_j+1) m_+^{(N_j+1)}(z)|}.
 $$
 we see that $L_+(z,\{N_j\})$ is equal to the all the quantities on the right hand
 side of \eqref{eq:olLequp}. In order to see the remaining inequality, observe that
 $$
  \begin{pmatrix} u_+(z,N_j) \\ a(N_j-1) u_+(z,N_j-1) \end{pmatrix} =
  \prod_{n=N_j-1}^{0} \frac{1}{a(n)} \begin{pmatrix} z - b(n) & 1 \\ a(n)^2 & 0 \end{pmatrix}
  \begin{pmatrix} m_+(z,J) \\ 1 \end{pmatrix},
 $$
 which implies the claim by the Ruelle--Osceledec theorem.
\end{proof} 

This shows (ii) of Theorem~\ref{thm:existlyap}.
Or next goal is to relate the Lyapunov exponent to the
asymptotics of the cosine solution. 
Introduce for $\im(z) > 0$
\be
 m_N(z) = \spr{\delta_N}{(J_{[0,N]} - z)^{-1} \delta_N}.
\ee
We have that

\begin{lemma}
 For $\im(z) > 0$,
 \be
  \frac{c(z,N+1)}{c(z,N)} = \frac{1}{a(N) m_N(z)},
 \ee
 and $|m_N(z)| \leq \frac{1}{|\im(z)|}$.
\end{lemma}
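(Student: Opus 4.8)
The plan is to establish the two assertions separately: the identity follows by combining the determinant formula \eqref{eq:detformula} with Cramer's rule, and the bound follows from the spectral theorem for the finite self-adjoint matrix $J_{[0,N]}$.

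For the identity, I would first exploit the telescoping of the products $\prod_j a(j)$ appearing in \eqref{eq:detformula}. Writing out $c(z,N+1)$ and $c(z,N)$ and dividing, all but one factor cancels, so that
\be
 \frac{c(z,N+1)}{c(z,N)} = \frac{1}{a(N)}\cdot\frac{\det(z - J_{[0,N]})}{\det(z - J_{[0,N-1]})}.
\ee
It then remains to recognize the ratio of determinants as $1/m_N(z)$. For this I would invoke the cofactor formula for the inverse: the $(N,N)$ entry of $(J_{[0,N]} - z)^{-1}$ equals the principal minor obtained by deleting the row and column indexed by $N$, divided by the full determinant. Since $J_{[0,N]}$ is tridiagonal, deleting that last row and column leaves precisely $J_{[0,N-1]}$, whence
\be
 m_N(z) = \spr{\delta_N}{(J_{[0,N]} - z)^{-1}\delta_N} = \frac{\det(J_{[0,N-1]} - z)}{\det(J_{[0,N]} - z)}.
\ee
Converting each $\det(J_\Lambda - z)$ into $\det(z - J_\Lambda)$ produces a factor $(-1)^{|\Lambda|}$, and the sizes of $J_{[0,N-1]}$ and $J_{[0,N]}$ have opposite parity; feeding this into the displayed ratio identifies it with $1/m_N(z)$ and yields the asserted formula, once the sign conventions for the transfer matrix in \eqref{eq:cnsn} and for $m_N$ are accounted for. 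I expect this parity and normalization bookkeeping to be the only delicate point; the rest is a direct substitution.

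For the bound, I would use that $J_{[0,N]}$ is a finite real symmetric, hence self-adjoint, matrix, so the spectral theorem represents
\be
 m_N(z) = \int \frac{d\rho_N(\lambda)}{\lambda - z},
\ee
where $\rho_N$ is the spectral measure associated with $\delta_N$, a positive measure of total mass $\|\delta_N\|^2 = 1$. As $\sigma(J_{[0,N]}) \subseteq \R$, one has $|\lambda - z| \geq |\im(z)|$ for every $\lambda$ in the support of $\rho_N$, and therefore $|m_N(z)| \leq |\im(z)|^{-1}$. Equivalently, one may bound the resolvent directly, $\|(J_{[0,N]} - z)^{-1}\| \leq \dist(z,\sigma(J_{[0,N]}))^{-1} \leq |\im(z)|^{-1}$, and apply Cauchy--Schwarz with $\|\delta_N\| = 1$. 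The main obstacle throughout is thus purely the sign and normalization tracking in the first identity; the resolvent estimate is routine.
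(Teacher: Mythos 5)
Your proposal is correct in substance but takes a genuinely different route from the paper. The paper never touches determinants: it sets $v(n)=\spr{\delta_n}{(J_{[0,N]}-z)^{-1}\delta_N}$ for $0\le n\le N$, extends by $v(-1)=0$ and $v(N+1)=\pm 1/a(N)$, checks that $v$ solves the difference equation at $1\le n\le N$, and concludes $c(z,\cdot)=C\cdot v$ by uniqueness of solutions vanishing at $n=-1$; the ratio is then read off from $v(N+1)/v(N)$. You instead combine the characteristic-polynomial formula \eqref{eq:detformula} with the cofactor expansion of the resolvent, identifying $m_N(z)=\det(J_{[0,N-1]}-z)/\det(J_{[0,N]}-z)$. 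Both arguments are standard; yours has the advantage that every quantity is an explicit polynomial, so the sign is completely forced, whereas the paper's is shorter but hides the sign in the choice of $v(N+1)$. Your bound $|m_N(z)|\le |\im(z)|^{-1}$ via the spectral measure (or the resolvent norm) is the standard argument and is fine.

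The one step you should not have deferred is exactly the ``sign and normalization bookkeeping.'' Carried to the end, your parity count gives $(-1)^{N}/(-1)^{N+1}=-1$, hence
\be
 \frac{c(z,N+1)}{c(z,N)} = -\frac{1}{a(N)\, m_N(z)},
\ee
and no convention in \eqref{eq:cnsn} absorbs this minus sign: the transfer matrix normalization is already pinned down by \eqref{eq:detformula}, which your argument uses consistently. In fact the lemma as printed is off by a sign given the stated definition $m_N(z)=\spr{\delta_N}{(J_{[0,N]}-z)^{-1}\delta_N}$; the case $N=0$ makes this concrete, since $c(z,1)/c(z,0)=(z-b(0))/a(0)$ while $1/(a(0)m_0(z))=(b(0)-z)/a(0)$, and consistently, $c(z,N+1)/c(z,N)$ is a Herglotz function of $z$, which matches $-1/m_N$ rather than $+1/m_N$. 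The paper's own proof contains the matching slip: for $(J-z)v(n)=0$ to hold at $n=N$ one needs $v(N+1)=-1/a(N)$, not $+1/a(N)$. The discrepancy is harmless downstream, because the lemma is only ever used through $\log|c(z,N+1)/c(z,N)|=-\log|a(N)m_N(z)|$. Still, the lesson is that you should finish such a computation rather than assert it ``works out'': here it does not, and completing it is precisely how you would have detected the typo in the statement.
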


\begin{proof}
 Introduce $v$ for $0 \leq n \leq N+1$ by
 $$
  v(n) = \begin{cases} 0 & n < 0 \\ \spr{\delta_n}{(J_{[0,N]} - z)^{-1} \delta_N} & 0 \leq n \leq N \\ \frac{1}{a(N)} & n = N + 1. \end{cases}
 $$
 One then checks that $(J - z)v (n) = 0$ for $1 \leq n \leq N$
 and $v(-1) = 0$. Thus $c(z,n) = C \cdot v(n)$ for $0 \leq n \leq N+ 1$ for a fixed
 constant $C$. Now a computation shows that
 $$
  \frac{c(z,N+1)}{c(z,N)} = \frac{v(N+1)}{v(N)} = \frac{1}{a(N) m_N(z)}, 
 $$
 which shows the claim.
\end{proof}

As in the proof of Lemma~\ref{lem:up}, one can now show that
$$
 \lim_{j\to\infty} \frac{1}{N_j} \log|c(z,N_j)| = \lim_{j\to\infty} \frac{1}{N_j} \log\sqrt{|c(z,N_j)|^2|+c(z,N_j)|^2},
$$
which implies for $\im(z) > 0$ by the Ruelle--Osceledec theorem
\be\label{eq:upeqc}
 \lim_{j\to\infty} \frac{1}{N_j} \log|u_+(z,N_j)| = - \lim_{j\to\infty} \frac{1}{N_j} \log|c(z,N_j)|,
\ee
since the cosine solution $c$ can never decay, since $J$ is self-adjoint.
In particular, the limit on the right hand side of \eqref{eq:upeqc} exists for every $z$
with $\im(z) > 0$. 

\begin{lemma}
 We have that
 \be\label{eq:logczn}
  \frac{1}{n} \log|c(z,n)|
  = \int\log|z - t| d\nu_n - \frac{1}{n} \sum_{j=0}^{n-1} \log|a(j)|.
 \ee
\end{lemma}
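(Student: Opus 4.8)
The plan is to read the identity off directly from the determinant representation of the cosine solution, so the argument is essentially one computation. First I would invoke \eqref{eq:detformula}, which expresses the cosine solution as
\be
 c(z,n) = \frac{\det(z - J_{[0,n-1]})}{\prod_{j=0}^{n-1} a(j)}.
\ee
Since $J_{[0,n-1]}$ is the restriction of a self-adjoint Jacobi operator to the finite interval $[0,n-1]$, it is an $n \times n$ real symmetric matrix, so its characteristic polynomial factors over its (real) eigenvalues as $\det(z - J_{[0,n-1]}) = \prod_{j=0}^{n-1}(z - E_j([0,n-1]))$.

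Next I would take $\frac{1}{n}\log|\cdot|$ of both sides. Because $a(j) \ge 1/C_0 > 0$, the denominator contributes the term $-\frac{1}{n}\sum_{j=0}^{n-1}\log|a(j)|$, while the numerator gives $\frac{1}{n}\sum_{j=0}^{n-1}\log|z - E_j([0,n-1])|$. Finally, recalling the definition \eqref{eq:defnun} of $\nu_n$ as the normalized sum of point masses at the eigenvalues $E_j([0,n-1])$, this empirical average is exactly $\int \log|z - t|\,d\nu_n(t)$, which is precisely \eqref{eq:logczn}.

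There is no genuine obstacle here; the statement is a bookkeeping reformulation of the determinant formula together with the definition of the density-of-states measure. The only point deserving a word is the degenerate case in which $z$ is real and coincides with an eigenvalue: then $c(z,n) = 0$ and a $\log 0 = -\infty$ term appears on both sides, so the identity persists as an equality in $[-\infty,\infty)$, while for $\im(z) > 0$ every term is finite since the $E_j([0,n-1])$ are real. The integrability of this logarithmic singularity is exactly what makes the right-hand side a well-defined potential-theoretic integral, and it is this representation, combined with the weak-$\ast$ convergence $\nu_{N_j} \to \nu$ from Theorem~\ref{thm:existlyap}(i), that one later uses to pass to the limit $n = N_j \to \infty$.
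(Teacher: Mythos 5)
Your proposal is correct and is exactly the paper's argument: the paper proves this lemma in one line as ``a consequence of \eqref{eq:detformula} and \eqref{eq:defnun},'' which is precisely the computation you spell out (factor the characteristic polynomial over the real eigenvalues of the symmetric matrix $J_{[0,n-1]}$, take $\frac{1}{n}\log|\cdot|$, and identify the eigenvalue average with $\int \log|z-t|\,d\nu_n$). Your remark about the degenerate case $c(z,n)=0$ giving an equality in $[-\infty,\infty)$ is a sensible extra precaution that the paper leaves implicit.
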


\begin{proof}
 This is a consequence of \eqref{eq:detformula} and \eqref{eq:defnun}.
\end{proof}

\begin{lemma}
 Assume \eqref{eq:muconv}, then
 \be\label{eq:nulim}
  \nu = \lim_{j\to\infty} \nu_{N_j}
 \ee
 exists, and for $\im(z) > 0$
 \be\label{eq:thouless3}
  \lim_{j\to\infty} \frac{1}{N_j} \log|c(z,N_j)| = \log(A^{-1}) + \int \log|t - z| d\nu.
 \ee
 Furthermore, \eqref{eq:thouless3} even holds for almost every $z \in \R$.
\end{lemma}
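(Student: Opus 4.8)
The plan is to transfer the convergence of $\tfrac{1}{N_j}\log|c(z,N_j)|$, which is already available for $\im(z)>0$, down to the measures $\nu_{N_j}$, and then to pass to the limit in \eqref{eq:logczn}.

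\emph{Convergence off the real axis and identification of $\nu$.} Fix $z$ with $\im(z)>0$ and set $u_j(z)=\tfrac{1}{N_j}\log|c(z,N_j)|$. By Lemma~\ref{lem:up} the quantity $\tfrac{1}{N_j}\log|u_+(z,N_j)|$ has a genuine limit, and by \eqref{eq:upeqc} so does $u_j(z)$. Using \eqref{eq:logczn} and \eqref{eq:defA}, the logarithmic potentials $\int\log|z-t|\,d\nu_{N_j}(t)$ therefore converge at every $z\in\C$ with $\im(z)>0$. Now all the $\nu_{N_j}$ are probability measures supported in the fixed compact interval $[-3C_0,3C_0]$ (as $\|J\|\le 3C_0$), so by weak-$\ast$ compactness every subsequence of $(\nu_{N_j})$ has a weak-$\ast$ limit point. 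If $\sigma_1$ and $\sigma_2$ are two such limit points then, since $t\mapsto\log|z-t|$ is bounded and continuous on $[-3C_0,3C_0]$ for each fixed $z$ with $\im(z)>0$, we get $\int\log|z-t|\,d\sigma_1(t)=\int\log|z-t|\,d\sigma_2(t)$ throughout the upper half plane. Equality of these potentials on an open set forces equality of the Cauchy transforms $\int(t-z)^{-1}\,d\sigma_i(t)$ there, and Stieltjes inversion gives $\sigma_1=\sigma_2$. Hence the full sequence converges; this is \eqref{eq:nulim}, and we call the limit $\nu$.

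\emph{The Thouless formula for $\im(z)>0$.} For fixed $z$ with $\im(z)>0$, weak-$\ast$ convergence together with the boundedness of $\log|z-t|$ on the common support gives $\int\log|z-t|\,d\nu_{N_j}\to\int\log|z-t|\,d\nu$; combined with \eqref{eq:defA} and \eqref{eq:logczn} this is precisely \eqref{eq:thouless3}.

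\emph{Extension to almost every real $z$, and the main obstacle.} On the right-hand side, as $\eps\downarrow 0$ one has $\log|E+\I\eps-t|\downarrow\log|E-t|$, so monotone convergence yields $\int\log|E+\I\eps-t|\,d\nu\to\int\log|E-t|\,d\nu$, which is finite for almost every $E$ because the potential of a compactly supported probability measure is locally integrable. The left-hand side is the delicate point. The functions $u_j=\int\log|\cdot-t|\,d\nu_{N_j}(t)+c_j$, with $c_j=-\tfrac{1}{N_j}\sum_n\log|a(n)|\to\log(A^{-1})$, are subharmonic and uniformly bounded above on compacts; the principle of descent gives $\limsup_j u_j(E)\le u(E):=\int\log|E-t|\,d\nu+\log(A^{-1})$ for every $E$, and the lower envelope theorem upgrades this to $\limsup_j u_j(E)=u(E)$ for quasi-every, hence almost every, $E$. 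The real work is to replace this $\limsup$ by a genuine limit, i.e.\ to establish $\liminf_j u_j(E)\ge u(E)$ for almost every $E$. As simple ``sliding spike'' examples show, this cannot be deduced from weak-$\ast$ convergence of the $\nu_{N_j}$ alone; one must use that each $\nu_{N_j}$ is the normalized eigenvalue counting measure of $J_{[0,N_j-1]}$. Concretely, $u_j(E)$ drops appreciably below $u(E)$ only when $E$ lies within distance about $e^{-\delta N_j}$ of an eigenvalue, a set of $E$ of measure at most $2N_je^{-\delta N_j}$; a Borel--Cantelli argument then shows that almost every $E$ is this close for only finitely many $j$, giving $\liminf_j u_j(E)\ge u(E)-\delta$, and $\delta\downarrow 0$ finishes the argument. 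Controlling this uniformly when $N_j\to\infty$ very slowly is the one genuinely nontrivial estimate; alternatively the almost-everywhere statement can be quoted from Poltoratski--Remling \cite{pr}.
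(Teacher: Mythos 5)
Your treatment of \eqref{eq:nulim} and of \eqref{eq:thouless3} for $\im z>0$ is correct and essentially the paper's own argument: the paper likewise gets convergence of the potentials $\int\log|z-t|\,d\nu_{N_j}$ in the upper half plane from \eqref{eq:logczn}, \eqref{eq:upeqc} and Lemma~\ref{lem:up}, and deduces \eqref{eq:nulim} from the fact that the functions $t\mapsto\log|t-z|$, $\im z>0$, separate points on the real axis; your compactness-plus-Cauchy-transform-plus-Stieltjes-inversion argument is just this uniqueness step written out. For the boundary statement, the paper's entire proof is the observation that \eqref{eq:logczn} remains valid for $z\in\R$ together with a citation of Theorem~A.7 of \cite{siem}, which is precisely the principle of descent and lower envelope theorem you invoke; that tool delivers $\limsup_j\frac{1}{N_j}\log|c(E,N_j)|=\log(A^{-1})+\int\log|t-E|\,d\nu$ for quasi-every $E$ (hence almost every $E$), which is also why the remark after Theorem~\ref{thm:main} speaks of ``quasi-every''. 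So your diagnosis of the delicate point is accurate: read literally, the ``$\lim$'' in \eqref{eq:thouless3} is more than the cited theorem gives, and your proposal establishes exactly as much as the paper's proof does. Note, though, that the gap is harmless for the paper's purposes: $\ol{L}$ is itself defined as a $\limsup$, and the proof of Theorem~\ref{thm:existlyap}(iii) only combines the $\limsup$ identity with the Craig--Simon submean upper bound of \cite{cs2}, so nothing downstream needs the genuine limit. Your sketched upgrade to a true limit is, as you concede, incomplete: besides the failure of summability of $2N_je^{-\delta N_j}$ for slowly growing $N_j$ (e.g.\ $N_j\sim\log j$), the near-field bound also ignores clustering --- $\frac{1}{N_j}\sum_k\log|E-E_k|$ can drop by $O(1)$ through, say, $\sqrt{N_j}$ eigenvalues at distance $e^{-\sqrt{N_j}}$ from $E$ without any eigenvalue being $e^{-\delta N_j}$-close --- so one would need quantitative control of $\nu_{N_j}$ near $E$, not just a gap condition. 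Deferring that point to \cite{pr}, as you do, parallels the paper's own remark that similar results appear there; relative to the paper's proof, your proposal is not missing anything, and is in fact more candid about the $\limsup$/$\lim$ distinction.
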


\begin{proof}
 \eqref{eq:nulim} follows from \eqref{eq:logczn} and the fact that the family of
 functions $t \mapsto \log|t - z|$ for $\im(z) > 0$ separates points
 on the real axis. For the last statement, observe that \eqref{eq:logczn}
 remains valid for $z \in \R$, and then use Theorem~A.7. in \cite{siem}.
\end{proof}

This shows (i) of Theorem~\ref{thm:existlyap}.
Next, we observe that

\begin{lemma}
 For every $E \in \R$, we have that
 \be
  \ol{L}(E, \{N_j\}) \leq \log(A^{-1}) + \int \log|t - E| d\nu
 \ee
\end{lemma}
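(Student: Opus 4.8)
The plan is to estimate the transfer matrix entrywise through the sine and cosine solutions and then to pass to the limit using only the upper semicontinuous half of weak convergence. Write $T_{N_j}(E)$ for the matrix product appearing in the definition of $\ol{L}(E,\{N_j\})$. By \eqref{eq:cnsn} its four entries are $c(E,N_j)$, $s(E,N_j)$, $a(N_j-1)c(E,N_j-1)$ and $a(N_j-1)s(E,N_j-1)$, the last two carrying a factor bounded by $C_0$. Since the operator norm of a $2\times 2$ matrix is at most twice its largest entry in modulus, $\ol{L}(E,\{N_j\})$ is bounded above by the maximum, over these four entries, of $\limsup_{j\to\infty}\frac{1}{N_j}\log|\cdot|$. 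It therefore suffices to bound each of these four exponential growth rates by $\log(A^{-1})+\int\log|t-E|\,d\nu$.

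Next I would treat all four entries on the same footing. By \eqref{eq:detformula} each of them is, up to the bounded factor $a(N_j-1)$, a ratio $\det(E-J_\Lambda)/\prod_k a(k)$ with $\Lambda$ one of the intervals $[0,N_j-1]$, $[1,N_j-1]$, $[0,N_j-2]$, $[1,N_j-2]$. Denoting by $\nu_\Lambda$ the normalized eigenvalue counting measure of $J_\Lambda$, the computation leading to \eqref{eq:logczn} gives $\frac{1}{N_j}\log|\det(E-J_\Lambda)| = \frac{|\Lambda|}{N_j}\int\log|t-E|\,d\nu_\Lambda$, while $\frac{1}{N_j}\sum_k\log|a(k)|\to\log A$ by \eqref{eq:defA} and $\frac{1}{N_j}\log a(N_j-1)\to 0$. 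Each $J_\Lambda$ is a principal submatrix of $J_{[0,N_j-1]}$ obtained by deleting one or two boundary indices, so Cauchy interlacing forces $\nu_\Lambda$ to have the same weak$\ast$ limit $\nu$ as $\nu_{N_j}$ in \eqref{eq:nulim}, and $\frac{|\Lambda|}{N_j}\to 1$.

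The decisive step is the passage to the limit in $\int\log|t-E|\,d\nu_\Lambda$. For fixed $E$ the function $t\mapsto\log|t-E|$ is upper semicontinuous and, because all the spectra involved lie in one fixed compact interval, bounded above; hence the upper semicontinuous form of the Portmanteau theorem gives $\limsup_{j\to\infty}\int\log|t-E|\,d\nu_\Lambda\le\int\log|t-E|\,d\nu$. Combining this with the two convergences of the previous paragraph yields $\limsup_{j\to\infty}\frac{1}{N_j}\log|\cdot|\le\log(A^{-1})+\int\log|t-E|\,d\nu$ for each entry, and hence the claimed bound for $\ol{L}(E,\{N_j\})$.

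I expect the one-sidedness of the estimate to be the only genuine subtlety, and it is exactly what forces the conclusion to be an inequality: since $\log|t-E|\to-\infty$ as $t\to E$, only the upper semicontinuous direction of weak convergence is available, so no matching lower bound is obtainable for an arbitrary $E$, and equality fails precisely when $\nu_{N_j}$ concentrates anomalously near $E$. This is why the companion identity \eqref{eq:thouless3} is only asserted for almost every $E$. One small point to handle with care is that the factor $\frac{|\Lambda|}{N_j}<1$ multiplies a possibly negative integral; this cannot spoil the upper bound because $\int\log|t-E|\,d\nu_\Lambda$ is bounded above, so either it stays bounded (in which case the extra factor contributes $o(1)$) or it tends to $-\infty$ (in which case so does the product). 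Writing $\log|t-E|=M-(M-\log|t-E|)$ with a nonnegative lower semicontinuous remainder makes this precise in one line.
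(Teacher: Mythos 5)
Your proposal is correct, but it takes a genuinely different route from the paper. The paper settles the lemma in two lines of potential theory: $\ol{L}(z,\{N_j\})$ is a submean function of $z$, the map $z \mapsto \log(A^{-1}) + \int \log|t-z|\,d\nu$ is subharmonic, and the two agree for $\im(z)>0$ by \eqref{eq:thouless3}; Theorem~1.1 of Craig--Simon \cite{cs2} then gives the pointwise inequality at every real $E$ (morally: average $\ol{L}$ over a small circle about $E$, use the off-axis equality, shrink the radius, and invoke upper semicontinuity of the subharmonic side). You instead argue by hand: bound $\|T_{N_j}(E)\|$ by its four entries via \eqref{eq:cnsn}, identify each entry through \eqref{eq:detformula} as a determinant ratio, rewrite $\frac{1}{N_j}\log|\det(E-J_\Lambda)|$ as $\frac{|\Lambda|}{N_j}\int\log|t-E|\,d\nu_\Lambda$, use interlacing (deleting one or two boundary indices moves the eigenvalue counting function by at most $2$, hence by $O(1/N_j)$ after normalization) to see that all four $\nu_\Lambda$ share the weak-$\ast$ limit $\nu$ of \eqref{eq:nulim}, and then apply only the upper semicontinuous half of the Portmanteau theorem, which is exactly the half available for $\log|t-E|$ (u.s.c., bounded above on the uniformly compact spectra, unbounded below). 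This replaces the potential theory with linear algebra and correctly isolates why only an inequality can hold at a fixed real $E$ --- the same mechanism that makes \eqref{eq:thouless3} an almost-everywhere statement on $\R$. What your route buys is an elementary, self-contained argument working directly at real energies, with no detour through $\im z>0$; what the paper's route buys is brevity, reuse of \eqref{eq:thouless3}, and compatibility with the quasi-everywhere refinement mentioned after Theorem~\ref{thm:main}, which is native to the potential-theoretic framework. One cosmetic improvement: for the prefactor $\frac{|\Lambda|}{N_j}$, the cleanest statement is that if $0<c_j\le 1$, $c_j\to 1$, and $x_j$ is bounded above, then any finite subsequential limit $L^*$ of $c_jx_j$ is also a subsequential limit of $x_j$ (since $x_j=c_j^{-1}(c_jx_j)$), whence $\limsup_j c_jx_j\le \limsup_j x_j$; your ``$M$ minus a nonnegative l.s.c.\ remainder'' decomposition achieves the same thing, and either version closes the gap you flagged.
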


\begin{proof}
 First observe that $\ol{L}(z, \{N_j\})$ is a submean function of $z$,
 and $z \mapsto \log(A^{-1}) + \int \log|t - z| d\nu$ is subharmonic.
 This implies the claim by Theorem~1.1. in \cite{cs2}.
\end{proof}

We now come to

\begin{proof}[Proof of Theorem~\ref{thm:existlyap} (iii)]
 This is a consequence of the last lemma, and the fact that
 $$
  |c(E, N_j)| \leq \left\|\prod_{n=N_j-1}^{0} \frac{1}{a(n)} \begin{pmatrix} z - b(n) & 1 \\ a(n)^2 & 0 \end{pmatrix}\right\|
 $$
 by \eqref{eq:cnsn}.
\end{proof}

\section*{Acknowledgments}

This project has profited from many discussions
with Jon Chaika, David Damanik, and Daniel Lenz. Furthermore,
I wish to thank Barry Simon for useful discussion and for
the invitation to the \textit{27th Western States Meeting}, where
the ideas related to the Denisov--Rakhmanov--Remling
theorem were born. Finally, I wish to thank the organizers
of the workshop \textit{Random Schr\"odinger Operators: Universal Localization, Correlations, and Interactions},
at the \textit{Banff International Research Station} for
their invitation, since this project took its final
form during this workshop.

\end{document}